\renewcommand\eqref[1]{(\ref{#1})} 
\numberwithin{equation}{section}
\theoremstyle{plain}
\newtheorem{thm}{Theorem}[section]
\newtheorem{prop}[thm]{Proposition}
\newtheorem{cor}[thm]{Corollary}
\newtheorem{lem}[thm]{Lemma}
\theoremstyle{definition}
\begin{document}

   \title[Green's identities on stratified Lie groups]
   {Green's identities, comparison principle and uniqueness of positive solutions
   	 for nonlinear $p$-sub-Laplacian equations on stratified Lie groups}

\author[Michael Ruzhansky]{Michael Ruzhansky}
\address{
  Michael Ruzhansky:
  \endgraf
  Department of Mathematics
  \endgraf
  Imperial College London
  \endgraf
  180 Queen's Gate, London SW7 2AZ
  \endgraf
  United Kingdom
  \endgraf
  {\it E-mail address} {\rm m.ruzhansky@imperial.ac.uk}
  }
\author[Durvudkhan Suragan]{Durvudkhan Suragan}
\address{
  Durvudkhan Suragan:
  \endgraf
  Institute of Mathematics and Mathematical Modeling
  \endgraf
  125 Pushkin str.
  \endgraf
  050010 Almaty
  \endgraf
  Kazakhstan
  {\it E-mail address} {\rm suragan@math.kz}
  }

\thanks{The authors were supported in parts by the EPSRC
 grant EP/K039407/1 and by the Leverhulme Grant RPG-2014-02,
 as well as by the MESRK grant 5127/GF4.  No new data was collected or generated during the course of research.}

     \keywords{$p$-sub-Laplacian, Green's identity, Picone's identity,  D\'iaz-Sa\'a inequality, comparison principle, stratified Lie group}
     \subjclass[2010]{35R03, 35S15}

     \begin{abstract}
     We propose analogues of Green's and Picone's identities for the $p$-sub-Laplacian on stratified Lie groups. In particular, these imply a generalised D\'iaz-Sa\'a inequality for the $p$-sub-Laplacian on stratified Lie groups. Using these 
     we derive a comparison principle and uniqueness of a positive solution to nonlinear hypoelliptic equations on general stratified Lie groups extending to the setting of the general stratified Lie groups previously known results on Euclidean and Heisenberg groups. 
     \end{abstract}
     \maketitle

\section{Introduction}

A stratified Lie group can be defined in many different equivalent ways (see e.g. \cite{FR} for the Lie algebra point of view). 
We follow the definition in \cite{BLU07}, that is, 
	a Lie group $\mathbb{G}=(\mathbb{R}^{N},\circ)$ is called a stratified Lie group (or a homogeneous Carnot group) if it satisfies the following two conditions:
	
	(i) For natural numbers $N_{1}+...+N_{r}=N$
	the decomposition $\mathbb{R}^{N}=\mathbb{R}^{N_{1}}\times...\times\mathbb{R}^{N_{r}}$ holds, and
	for each $\lambda>0$ the dilation $\delta_{\lambda}: \mathbb{R}^{N}\rightarrow \mathbb{R}^{N}$
	given by
	$$\delta_{\lambda}(x)\equiv\delta_{\lambda}(x^{(1)},...,x^{(r)}):=(\lambda x^{(1)},...,\lambda^{r}x^{(r)})$$
	is an automorphism of the group $\mathbb{G}.$ Here $x^{(k)}\in \mathbb{R}^{N_{k}}$ for $k=1,...,r.$
	
	(ii) Let $N_{1}$ be as in (i) and let $X_{1},...,X_{N_{1}}$ be the left invariant vector fields on $\mathbb{G}$ such that
	$X_{k}(0)=\frac{\partial}{\partial x_{k}}|_{0}$ for $k=1,...,N_{1}.$ 
	Then the H{\"o}rmander condition
	$${\rm rank}({\rm Lie}\{X_{1},...,X_{N_{1}}\})=N$$
	holds for every $x\in\mathbb{R}^{N},$ i.e. the iterated commutators
	of $X_{1},...,X_{N_{1}}$ span the Lie algebra of $\mathbb{G}.$

That is, we say that the triple $\mathbb{G}=(\mathbb{R}^{N},\circ, \delta_{\lambda})$ is a stratified Lie group (or a stratified group, in short). 
The number $r$ above is called the step of $\mathbb{G}$ and the left invariant vector fields $X_{1},...,X_{N_{1}}$ are called
the (Jacobian) generators of $\mathbb{G}$. 
The number
$$Q=\sum_{k=1}^{r}kN_{k}$$
is called the homogeneous dimension of $\mathbb{G}$.
We will also use the notation
$$\nabla_{\mathbb{G}}:=(X_{1},\ldots, X_{N_{1}})$$
for the (horizontal) gradient. We also recall that the standard Lebesgue measure $dx$ on $\mathbb{R}^{N}$ is the Haar measure for $\mathbb{G}$.
Let $\Omega\subset \mathbb{G}$ be an open set. The notation $u\in C^{1}(\Omega)$ means 
 $\nabla_{\mathbb{G}} u\in C(\Omega)$. 
 
We will also use the functional spaces $S^{1,p}(\Omega)=\{u:\;\Omega\rightarrow \mathbb{R};\;u,\,|\nabla_{\mathbb{G}} u|\in L^{p}(\Omega)\}$. 
 Moreover, let us consider the following functional 
$$J_{p}(u):=\left(\int_{\Omega}|\nabla_{\mathbb{G}} u|^{p}dx\right)^{\frac{1}{p}},$$
then we define the functional class $\overset{\circ}{S}^{1,p}(\Omega)$ to be the completion of $C^{1}_{0}(\Omega)$ in the norm generated by $J_{p}$ (see, e.g. \cite{CDG}). 
The operator 
\begin{equation}\label{pLap}
\mathcal{L}_{p}f:=\nabla_{\mathbb{G}}\cdot(|\nabla_{\mathbb{G}}f|^{p-2}\nabla_{\mathbb{G}}f),\quad 1<p<\infty,
\end{equation}
is called the subelliptic $p$-Laplacian or, in short, $p$-sub-Laplacian. 
Throughout this paper $\Omega\subset\mathbb{G}$ will be an admissible domain, that is, an open set $\Omega\subset\mathbb{G}$ is called an {\em admissible domain} if it is bounded and if its boundary $\partial\Omega$ is piecewise smooth and simple i.e., it has no self-intersections.
The condition for the boundary to be simple amounts to $\partial\Omega$ being orientable.
In $\Omega\subset\mathbb{G}$ we consider the following nonlinear Dirichlet boundary value problem for the $p$-sub-Laplacian
\begin{equation}\label{bvp}
\left\{
\begin{array}{ll}
	-\mathcal{L}_{p}u=F(x,u), \quad\text{in}\; \Omega,\\
	u=0, \quad\text{on}\; \partial\Omega. \\
\end{array}
\right.
\end{equation}
In this note we assume that:
\begin{itemize}
\item[(a)] The function $F:\Omega\times\mathbb{R}\rightarrow\mathbb{R}$ is a positive, bounded and measurable function and there exists a positive constant $C>0$ such that $F(x,\rho)\leq C(\rho^{p-1}+1)$ for a.e. $x\in\Omega$.
\item[(b)] The function $\rho\mapsto \frac{F(x,\rho)}{\rho^{p-1}}$
is strictly decreasing on $(0,\infty)$ for  a.e. $x\in\Omega$.    
\end{itemize}

As usual, a (weak) solution of \eqref{bvp}
means a function $u\in \overset{\circ}{S}^{1,p}(\Omega)\cap L^{\infty}(\Omega)$ such that
$$
\int_{\Omega}|
\nabla_{\mathbb{G}}u|^{p-2}(\mathcal{\widetilde{\nabla} }u)\phi d\nu=\int_{\Omega}F(x,u)\phi d\nu$$
holds for all $\phi\in C^{\infty}_{0}(\Omega),$
where
$$\mathcal{\widetilde{\nabla} }u=\sum_{k=1}^{N_{1}}\left(X_{k}u\right)X_{k}.$$
Nowadays, in the abelian case, there is vast literature devoted to the  study of the boundary value problem \eqref{bvp}.  
In the analysis of subelliptic $p$-Laplacian on, e.g., Heisenberg type groups the boundary value problems of this type have been also intensively investigated.
In the non-abelian case some of very first results obtained regarding the boundary value problem \eqref{bvp} with $p=2$ are by Garofalo and Lanconelli \cite{GL}, where the authors obtained existence and nonexistence results using Rellich-Pohozaev type inequalities.
Since then a number of studies have been devoted to this subject and most of them are on the Heisenberg group. See \cite{Bal}, \cite{Bia}, \cite{Cap}, \cite{Dou}, \cite{DT16}, \cite{GN}, \cite{LU}, \cite{Ruzhansky-Suragan:Kohn-Laplacian} and \cite{Ugu} as well as references therein.  
To the best of our knowledge, these results have not been extended to the general stratified Lie groups. Therefore, the aim of this short note is to extend to the setting of the stratified Lie groups previously known results on Euclidean and Heisenberg groups. To reach the desired results first one tries to obtain related Picone type identities$\backslash$inequalities (see, e.g. \cite{AP}, \cite{Bal} and \cite{Dou}), and we follow these ideas in the proofs. However, stratified group adapted $p$-sub-Laplacian Green identities based on our previous paper \cite{Ruzhansky-Suragan:Layers} play key roles in some calculations. Thus, we discuss $p$-sub-Laplacian Green identities and their applications in Section \ref{SEC:1}. In Section \ref{SEC:2} we derive versions of Picone's equality and inequality,
and give their applications, namely, proofs of a comparison principle as well as uniqueness of a positive solution of the Dirichlet boundary value problem for the $p$-sub-Laplacian \eqref{bvp}. 

\section{$p$-sub-Laplacian Green's identity and consequences}
\label{SEC:1}

Let $Q\geq 3$ be the homogeneous dimension of a stratified Lie group $\mathbb{G}$ and let $d\nu$ be the volume element on $\mathbb{G}$. Note that the Lebesque measure on $\mathbb R^{N}$ is the Haar measure for $\mathbb{G}$ (see, e.g. \cite[Proposition 1.3.21]{BLU07} or \cite[Proposition 1.6.6]{FR}). The notations $X=\{X_{1},...,X_{N_{1}}\}$ are left-invariant vector fields in the first stratum of $\mathbb{G}$, and $\langle X_{k}, d\nu\rangle$ is the natural pairing between vector fields and differential forms, more precisely, we have
\begin{equation}\label{Xjdnu0}
\langle X_{k}, d\nu(x)\rangle=\bigwedge_{j=1,j\neq k}^{N_{1}}dx_{j}^{(1)}\bigwedge_{l=2}^{r}
\bigwedge_{m=1}^{N_{l}}\theta_{l,m},
\end{equation}
where
\begin{equation}\label{theta0}
\theta_{l,m}=-\sum_{k=1}^{N_{1}}a_{k,m}^{(l)}(x^{(1)},\ldots,x^{(l-1)})
dx_{k}^{(1)}+dx_{m}^{(l)},\,\,l=2,\ldots,r,\,\,m=1,\ldots,N_{l},
\end{equation}
and $a_{k,m}^{(l)}$ is a $\delta_{\lambda}$-homogeneous polynomial of degree $l-1$
such that
\begin{equation}\label{Xk0}
X_{k}=\frac{\partial}{\partial x_{k}^{(1)}}+
\sum_{l=2}^{r}\sum_{m=1}^{N_{l}}a_{k,m}^{(l)}(x^{(1)},...,x^{(l-1)})
\frac{\partial}{\partial x_{m}^{(l)}},
\end{equation}
see \cite{Ruzhansky-Suragan:Layers}.
As mentioned in the introduction throughout this paper we assume that 	a domain $\Omega\subset\mathbb{G}$ is an admissible domain. We recall the following divergence formula for $X_{k}$'s.
\begin{prop}[\cite{Ruzhansky-Suragan:Layers}]
	\label{stokes}
 Let $f_{k}\in C^{1}(\Omega)\bigcap C(\overline{\Omega}),\,k=1,\ldots,N_{1}$. Then for each $k=1,\ldots,N_{1},$ we have
	\begin{equation}\label{EQ:S1}
	\int_{\Omega}X_{k}f_{k}d\nu=
	\int_{\partial\Omega}f_{k} \langle X_{k},d\nu\rangle.
	\end{equation}
	Consequently, we also have
	\begin{equation}\label{EQ:S2}
	\int_{\Omega}\sum_{k=1}^{N_{1}}X_{k}f_{k}d\nu=
	\int_{\partial\Omega}\sum_{k=1}^{N_{1}}f_{k} \langle X_{k},d\nu\rangle.
	\end{equation}
\end{prop}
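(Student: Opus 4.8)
My plan is to reduce Proposition~\ref{stokes} to the classical Gauss--Green (divergence) theorem on $\mathbb{R}^N$, once two elementary facts about the vector fields in \eqref{Xk0} are established: (i) each $X_k$ is divergence-free for the Lebesgue measure, $\operatorname{div}X_k=0$; and (ii) the $(N-1)$-form $\langle X_k,d\nu\rangle$ of \eqref{Xjdnu0} coincides, up to the orientation convention implicit in the pairing, with the interior product $\iota_{X_k}(d\nu)$, so that its restriction to $\partial\Omega$ is the flux form $(X_k\cdot\mathbf{n})\,d\sigma$ of $X_k$. Granting (i) and (ii), the argument is short: writing $X_kf_k=X_kf_k+f_k\operatorname{div}X_k=\operatorname{div}(f_kX_k)$ and using that $\Omega$ is admissible (bounded, with piecewise-smooth orientable boundary), the divergence theorem gives $\int_\Omega X_kf_k\,d\nu=\int_{\partial\Omega}(f_kX_k)\cdot\mathbf{n}\,d\sigma=\int_{\partial\Omega}f_k\langle X_k,d\nu\rangle$, which is \eqref{EQ:S1}; summing over $k=1,\dots,N_1$ gives \eqref{EQ:S2}.

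Fact (i) I would read off from \eqref{Xk0}: $\operatorname{div}X_k=\partial_{x_k^{(1)}}(1)+\sum_{l=2}^{r}\sum_{m=1}^{N_l}\partial_{x_m^{(l)}}a_{k,m}^{(l)}(x^{(1)},\dots,x^{(l-1)})=0$, since each polynomial $a_{k,m}^{(l)}$ depends only on the earlier strata $x^{(1)},\dots,x^{(l-1)}$ and in particular not on the variable $x^{(l)}$ against which it is differentiated. For (ii) I would first rewrite the volume form as $d\nu=\bigwedge_{j=1}^{N_1}dx_j^{(1)}\wedge\bigwedge_{l=2}^{r}\bigwedge_{m=1}^{N_l}\theta_{l,m}$, which is legitimate because, by \eqref{theta0}, replacing each $dx_m^{(l)}$ by $\theta_{l,m}$ only adds multiples of the $dx_k^{(1)}$ already present in the first block. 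The key observation is then that $\iota_{X_k}\theta_{l,m}=0$ for all $l\ge 2$ and $m$: by \eqref{Xk0} the $dx_m^{(l)}$-component of $X_k$ equals $a_{k,m}^{(l)}$ and its $dx_{k'}^{(1)}$-component equals $\delta_{kk'}$, so $\iota_{X_k}\theta_{l,m}=a_{k,m}^{(l)}-\sum_{k'}a_{k',m}^{(l)}\delta_{kk'}=0$. As the interior product is an antiderivation, $\iota_{X_k}$ annihilates the whole block $\bigwedge_{l,m}\theta_{l,m}$, so $\iota_{X_k}(d\nu)=\big(\iota_{X_k}\bigwedge_{j=1}^{N_1}dx_j^{(1)}\big)\wedge\bigwedge_{l,m}\theta_{l,m}=(-1)^{k-1}\bigwedge_{j\ne k}dx_j^{(1)}\wedge\bigwedge_{l,m}\theta_{l,m}$, which is $\langle X_k,d\nu\rangle$ up to the sign $(-1)^{k-1}$; combined with the standard fact that $\iota_V(dx)$ restricts on a hypersurface to $(V\cdot\mathbf{n})\,d\sigma$, this gives (ii). One can also avoid the normal vector entirely: by the divergence-free property and Cartan's formula $d\langle X_k,d\nu\rangle=\pm\,\mathcal{L}_{X_k}d\nu=\pm(\operatorname{div}X_k)\,d\nu=0$, while contracting $X_k$ into the vanishing $(N+1)$-form $df_k\wedge d\nu$ yields $df_k\wedge\iota_{X_k}d\nu=(X_kf_k)\,d\nu$; hence $d\big(f_k\langle X_k,d\nu\rangle\big)=\pm(X_kf_k)\,d\nu$ and Stokes' theorem on $\Omega$ finishes the proof.

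The one point that genuinely needs care, and which I expect to be the main obstacle, is the low regularity: $f_k$ is only \emph{horizontally} $C^1$ in $\Omega$ (meaning $X_jf_k\in C(\Omega)$ for all $j$), is merely continuous up to $\partial\Omega$, and $\partial\Omega$ is piecewise, not globally, smooth. I would deal with this by exhausting $\Omega$ by smooth subdomains, regularising $f_k$ by mollification $f_k^{\varepsilon}=f_k*\varphi_{\varepsilon}$ (the commutator of $X_k$ with convolution is controlled by a Friedrichs-type estimate, and is in fact elementary here because $X_k$ has polynomial coefficients, so that $X_kf_k^{\varepsilon}\to X_kf_k$ locally uniformly), applying the smooth divergence theorem to $f_k^{\varepsilon}X_k$ on each subdomain, and then letting $\varepsilon\to0$ and exhausting $\Omega$, using the continuity of $f_k$ and $X_kf_k$ on $\overline{\Omega}$ together with the smoothness and boundedness of the coefficients of $\langle X_k,d\nu\rangle$. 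The orientability of $\partial\Omega$ built into the definition of an admissible domain is precisely what makes the boundary integral and this passage to the limit meaningful.
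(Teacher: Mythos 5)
Your proof is correct and is essentially the argument behind the cited result: the paper itself gives no proof of Proposition \ref{stokes}, quoting it from \cite{Ruzhansky-Suragan:Layers}, where the forms $\theta_{l,m}$ in \eqref{theta0} are set up precisely so that $\iota_{X_{k}}\theta_{l,m}=0$, $\operatorname{div}X_{k}=0$, and hence $d\bigl(f_{k}\langle X_{k},d\nu\rangle\bigr)=(X_{k}f_{k})\,d\nu$, after which Stokes' theorem on the admissible domain yields \eqref{EQ:S1} and summation over $k$ yields \eqref{EQ:S2}. Your contraction computation, the sign/orientation remark, and the mollification--exhaustion treatment of the horizontal $C^{1}$ regularity all match that approach, so there is no gap to report.
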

As a consequences of the above Divergence formula we obtain the following analogue of Green's first identity for the $p$-sub-Laplacian. This version was proved for the sub-Laplacian ($p=2$) in \cite{Ruzhansky-Suragan:Layers}, and now we extend it to all $1<p<\infty.$
\begin{prop}[Green's first identity]\label{green1} Let $1<p<\infty.$
 Let $v\in C^{1}(\Omega)\bigcap C(\overline{\Omega})$ and  $u\in C^{2}(\Omega)\bigcap C^{1}(\overline{\Omega})$. Then
	\begin{equation} \label{g1}
	\int_{\Omega}\left((|\nabla_{\mathbb{G}}u|^{p-2}\mathcal{\widetilde{\nabla}}v) u+v\mathcal{L}_{p}u\right) d\nu=\int_{\partial\Omega}|\nabla_{\mathbb{G}}u|^{p-2}v\langle \mathcal{\widetilde{\nabla} }u,d\nu\rangle,
	\end{equation}
	where $\mathcal{L}_{p}$ is the p-sub-Laplacian on $\mathbb{G}$ and
	$$\mathcal{\widetilde{\nabla} }u=\sum_{k=1}^{N_{1}}\left(X_{k}u\right)X_{k}.$$
\end{prop}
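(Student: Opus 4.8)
The plan is to obtain \eqref{g1} directly from the divergence formula \eqref{EQ:S2} of Proposition~\ref{stokes} applied to a well-chosen vector field. Concretely, I would set
$$f_{k}:=v\,|\nabla_{\mathbb{G}}u|^{p-2}\,X_{k}u,\qquad k=1,\ldots,N_{1},$$
so that $\sum_{k=1}^{N_{1}}f_{k}X_{k}=v\,|\nabla_{\mathbb{G}}u|^{p-2}\,\mathcal{\widetilde{\nabla}}u$. With this choice the boundary integrand in \eqref{EQ:S2} is precisely the right-hand side of \eqref{g1}, since $\langle\mathcal{\widetilde{\nabla}}u,d\nu\rangle=\sum_{k}(X_{k}u)\langle X_{k},d\nu\rangle$ and hence $\sum_{k}f_{k}\langle X_{k},d\nu\rangle=v\,|\nabla_{\mathbb{G}}u|^{p-2}\langle\mathcal{\widetilde{\nabla}}u,d\nu\rangle$.

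Next I would expand $\sum_{k}X_{k}f_{k}$ using that each $X_{k}$ is a first-order derivation and so obeys the Leibniz rule:
$$X_{k}f_{k}=(X_{k}v)\,|\nabla_{\mathbb{G}}u|^{p-2}\,X_{k}u+v\,X_{k}\!\left(|\nabla_{\mathbb{G}}u|^{p-2}X_{k}u\right).$$
Summing over $k=1,\ldots,N_{1}$ and recognising $\sum_{k}(X_{k}v)(X_{k}u)=(\mathcal{\widetilde{\nabla}}v)u$ together with $\sum_{k}X_{k}\!\left(|\nabla_{\mathbb{G}}u|^{p-2}X_{k}u\right)=\nabla_{\mathbb{G}}\cdot(|\nabla_{\mathbb{G}}u|^{p-2}\nabla_{\mathbb{G}}u)=\mathcal{L}_{p}u$ as in \eqref{pLap}, one gets
$$\sum_{k=1}^{N_{1}}X_{k}f_{k}=\left(|\nabla_{\mathbb{G}}u|^{p-2}\mathcal{\widetilde{\nabla}}v\right)u+v\,\mathcal{L}_{p}u,$$
which is exactly the integrand on the left-hand side of \eqref{g1}. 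Integrating this identity over $\Omega$ against $d\nu$ and invoking \eqref{EQ:S2} then yields \eqref{g1}, in complete parallel with the case $p=2$ in \cite{Ruzhansky-Suragan:Layers}.

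The only point that genuinely requires care — and the one I expect to be the main obstacle — is verifying the hypothesis of Proposition~\ref{stokes}, namely $f_{k}\in C^{1}(\Omega)\cap C(\overline{\Omega})$. Membership in $C(\overline{\Omega})$ is harmless: from $|X_{k}u|\le|\nabla_{\mathbb{G}}u|$ one has $|f_{k}|\le|v|\,|\nabla_{\mathbb{G}}u|^{p-1}$, and $t\mapsto t^{p-1}$ is continuous on $[0,\infty)$, so $f_{k}\in C(\overline{\Omega})$ whenever $u\in C^{1}(\overline{\Omega})$ and $v\in C(\overline{\Omega})$. The delicate issue is $C^{1}$-regularity in $\Omega$, which reduces to the smoothness of the map $\xi\mapsto|\xi|^{p-2}\xi$ on $\mathbb{R}^{N_{1}}$: this map is $C^{1}$ for $p\ge2$, so in that range the claim $f_{k}\in C^{1}(\Omega)$ is immediate from $u\in C^{2}(\Omega)$ and $v\in C^{1}(\Omega)$, and the argument above is complete as stated. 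For $1<p<2$ the map is only Hölder continuous at the origin, so at points where $\nabla_{\mathbb{G}}u$ vanishes an approximation is needed: replace $|\nabla_{\mathbb{G}}u|^{p-2}$ by $(|\nabla_{\mathbb{G}}u|^{2}+\varepsilon)^{(p-2)/2}$, run the above computation for the resulting $C^{1}(\Omega)\cap C(\overline{\Omega})$ fields $f_{k}^{\varepsilon}$, and pass to the limit $\varepsilon\to0^{+}$, using the uniform bound $|f_{k}^{\varepsilon}|\le|v|\,|\nabla_{\mathbb{G}}u|^{p-1}$ for the boundary integral and dominated convergence for the volume integral; alternatively one restricts to the open set $\{\nabla_{\mathbb{G}}u\neq0\}$ and extends by the continuity already established. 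Apart from this regularity bookkeeping, the proof is a one-line combination of the divergence theorem \eqref{EQ:S2} with the Leibniz rule.
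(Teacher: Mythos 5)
Your proposal is correct and follows essentially the same route as the paper's own proof: the same choice $f_{k}=v|\nabla_{\mathbb{G}}u|^{p-2}X_{k}u$, the same Leibniz expansion of $\sum_{k}X_{k}f_{k}$, and the same appeal to the divergence formula \eqref{EQ:S2} of Proposition~\ref{stokes}. Your additional remark on the regularity of $\xi\mapsto|\xi|^{p-2}\xi$ for $1<p<2$ (and the $\varepsilon$-regularization to justify $f_{k}\in C^{1}(\Omega)$ near zeros of $\nabla_{\mathbb{G}}u$) addresses a point the paper passes over silently, and is a welcome refinement rather than a divergence in method.
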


\begin{proof}[Proof of Proposition \ref{green1}]
	Let $f_{k}=v|\nabla_{\mathbb{G}}u|^{p-2} X_{k}u,$ then
	
	$$\sum_{k=1}^{N_{1}}X_{k}f_{k}=
	(|\nabla_{\mathbb{G}}u|^{p-2}\mathcal{\widetilde{\nabla} }v) u+v\mathcal{L}_{p}u.$$
	By integrating both sides of this equality over $\Omega$ and using  Proposition \ref{stokes} we obtain
	\begin{multline*}
	\int_{\Omega}\left((|\nabla_{\mathbb{G}}u|^{p-2}\mathcal{\widetilde{\nabla} }v) u+v\mathcal{L}_{p}u\right) d\nu$$$$=\int_{\Omega}
	\sum_{k=1}^{N_{1}}X_{k}f_{k}d\nu \\
	=\int_{\partial\Omega}\sum_{k=1}^{N_{1}}\langle f_{k}X_{k},d\nu\rangle
	=\int_{\partial\Omega}\sum_{k=1}^{N_{1}}\langle v|\nabla_{\mathbb{G}}u|^{p-2} X_{k}uX_{k},d\nu\rangle
	=\int_{\partial\Omega}|\nabla_{\mathbb{G}}u|^{p-2}v\langle \mathcal{\widetilde{\nabla} }u,d\nu\rangle,
	\end{multline*}
	completing the proof.
\end{proof}
When $v=1$ Proposition \ref{green1} implies the following analogue of Gauss' mean value formula for
$p$-harmonic functions:
\begin{cor}\label{COR:v0}
	If $\mathcal{L}_{p}u=0$ in an admissible domain $\Omega\subset\mathbb{G}$, then
	$$\int_{\partial\Omega}|\nabla_{\mathbb{G}}u|^{p-2}\langle \mathcal{\widetilde{\nabla} }u,d\nu\rangle=0.$$
\end{cor}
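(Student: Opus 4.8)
The plan is to apply Proposition~\ref{green1} with the particular choice $v\equiv 1$, which collapses the volume integral to a single term. Explicitly, taking $v=1$ we have $\nabla_{\mathbb{G}}v=0$, so the integrand $(|\nabla_{\mathbb{G}}u|^{p-2}\widetilde{\nabla}v)u$ vanishes pointwise, and the left-hand side of \eqref{g1} reduces to $\int_{\Omega} v\,\mathcal{L}_{p}u\,d\nu = \int_{\Omega}\mathcal{L}_{p}u\,d\nu$.

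Next I would use the hypothesis $\mathcal{L}_{p}u=0$ in $\Omega$, which makes this remaining volume integral vanish identically. On the boundary side, substituting $v=1$ into the right-hand side of \eqref{g1} leaves $\int_{\partial\Omega}|\nabla_{\mathbb{G}}u|^{p-2}\langle\widetilde{\nabla}u,d\nu\rangle$. Equating the two sides gives precisely the claimed identity
\[
\int_{\partial\Omega}|\nabla_{\mathbb{G}}u|^{p-2}\langle\widetilde{\nabla}u,d\nu\rangle=0.
\]

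The only point requiring a word of care is the regularity: Proposition~\ref{green1} is stated for $v\in C^{1}(\Omega)\cap C(\overline{\Omega})$ and $u\in C^{2}(\Omega)\cap C^{1}(\overline{\Omega})$. The constant function $v\equiv 1$ trivially lies in $C^{1}(\Omega)\cap C(\overline{\Omega})$, so no extra assumption beyond what is already implicit in writing $\mathcal{L}_{p}u=0$ classically (namely $u\in C^{2}(\Omega)\cap C^{1}(\overline{\Omega})$) is needed. Since every step is a direct substitution, there is no real obstacle here; the corollary is an immediate specialization of Green's first identity, in complete analogy with how Gauss' mean value property for harmonic functions follows from the classical Green identity in the Euclidean setting.
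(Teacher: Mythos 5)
Your proof is correct and coincides with the paper's own argument: the corollary is obtained precisely by taking $v\equiv 1$ in Green's first identity \eqref{g1}, so the gradient term vanishes, the term $\int_{\Omega}\mathcal{L}_{p}u\,d\nu$ vanishes by hypothesis, and the boundary integral must be zero. The regularity remark is also consistent with the hypotheses of Proposition \ref{green1}.
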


As consequences of Proposition \ref{green1} we obtain the following uniqueness results for
not only the $p$-sub-Laplacian Dirichlet boundary value problem, but also other boundary value problems of different types, such as Neumann, Robin, or mixed types of conditions on different parts of the boundary.

We should mention that most of
the following results are known and can be proved by other methods too, but using given Green's first identity in Proposition \ref{green1} their proofs become elementary.

\begin{cor}\label{uniqueness}
	The Dirichlet boundary value problem
	\begin{equation}\label{Lu0}
	\mathcal{L}_{p}u(x)=0,\,\, x\in\Omega\subset\mathbb{G},
	\end{equation}
	\begin{equation}\label{u0}
	u(x)=0,\,\, x\in\partial\Omega,
	\end{equation}
	has the unique trivial solution
	$u\equiv0$ in the class of functions $C^{2}(\Omega)\bigcap C^{1}(\overline{\Omega})$.
\end{cor}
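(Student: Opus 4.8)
The plan is to use Green's first identity (Proposition \ref{green1}) with the natural choice $v=u$ and the observation that $\mathcal{L}_p u = 0$. Suppose $u\in C^{2}(\Omega)\cap C^{1}(\overline{\Omega})$ solves \eqref{Lu0}--\eqref{u0}. Since $u\in C^{1}(\overline{\Omega})$ it is in particular in $C^{1}(\Omega)\cap C(\overline{\Omega})$, so $(v,u)=(u,u)$ is an admissible pair for \eqref{g1}. Substituting $v=u$ and using $\mathcal{L}_p u \equiv 0$ in $\Omega$, the left-hand side of \eqref{g1} collapses to $\int_{\Omega}|\nabla_{\mathbb{G}}u|^{p-2}(\widetilde{\nabla}u)\,u\,d\nu = \int_{\Omega}|\nabla_{\mathbb{G}}u|^{p-2}\sum_{k=1}^{N_1}(X_k u)^2\,d\nu = \int_{\Omega}|\nabla_{\mathbb{G}}u|^{p}\,d\nu$, where I have used that the vector field $(\widetilde{\nabla}u)\,u$ acting as a first-order operator produces $\sum_k (X_k u)(X_k u)$. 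On the right-hand side, the boundary integrand $|\nabla_{\mathbb{G}}u|^{p-2}\,v\,\langle\widetilde{\nabla}u,d\nu\rangle$ contains the factor $v=u$, which vanishes on $\partial\Omega$ by the boundary condition \eqref{u0}; hence the boundary term is zero.

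Combining these two computations gives $\int_{\Omega}|\nabla_{\mathbb{G}}u|^{p}\,d\nu = 0$. Since $p>1$ the integrand is nonnegative and continuous on $\Omega$, so $|\nabla_{\mathbb{G}}u|\equiv 0$ in $\Omega$, i.e. $X_k u \equiv 0$ for every $k=1,\dots,N_1$. By the Hörmander condition (ii), the iterated commutators of $X_1,\dots,X_{N_1}$ span the full tangent space at every point; since $X_k u=0$ for all $k$, all iterated derivatives of $u$ along these vector fields vanish as well, and therefore $u$ is locally constant on the connected components of $\Omega$. (Alternatively one invokes the standard fact that a function annihilated by a Hörmander system of vector fields is constant on connected open sets.) Finally, continuity of $u$ up to $\overline{\Omega}$ together with $u=0$ on $\partial\Omega$ forces the constant to be $0$ on each component whose closure meets $\partial\Omega$; for an admissible (bounded, with piecewise smooth boundary) domain this pins down $u\equiv 0$ on all of $\Omega$.

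The only mild subtlety — the step I would flag as needing a word of justification rather than being purely routine — is passing from $\nabla_{\mathbb{G}}u\equiv 0$ to $u$ being constant: on a stratified group the horizontal gradient alone does not obviously control $u$, but it does precisely because of the Hörmander bracket-generating condition built into the definition of $\mathbb{G}$, which propagates the vanishing of $X_1 u,\dots,X_{N_1}u$ to the vanishing of the full Euclidean gradient of $u$. Everything else is a direct substitution into \eqref{g1} and an appeal to connectedness of the admissible domain together with the Dirichlet condition. No further machinery beyond Proposition \ref{green1} is required.
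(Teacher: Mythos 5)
Your proposal is correct and follows essentially the same route as the paper: substitute $v=u$ (the paper takes $v=\overline{u}$ to cover complex-valued $u$) into Green's first identity \eqref{g1}, kill the boundary term with \eqref{u0} and the bulk term with \eqref{Lu0} to get $\int_{\Omega}|\nabla_{\mathbb{G}}u|^{p}\,d\nu=0$, and then pass from $X_{k}u\equiv 0$ to constancy via the bracket-generating (H\"ormander) property, concluding $u\equiv 0$ from the Dirichlet condition. Your extra remarks on connectedness and on why vanishing of the horizontal gradient forces constancy only make explicit what the paper states in one line.
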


\begin{proof}[Proof of Corollary \ref{uniqueness}.]
	Set $v=\overline{u}$ in \eqref{g1}, then by \eqref{Lu0} and \eqref{u0} we get
	$$\int_{\Omega}(|\nabla_{\mathbb{G}}u|^{p-2}\mathcal{\widetilde{\nabla} }\overline{u}) u d\nu=\int_{\Omega}\left((|\nabla_{\mathbb{G}}u|^{p-2}
	\mathcal{\widetilde{\nabla} }\overline{u}) u+\overline{u}\mathcal{L}_{p}u\right)d\nu$$$$
	=\int_{\partial\Omega}|\nabla_{\mathbb{G}}u|^{p-2}\overline{u}\langle \mathcal{\widetilde{\nabla} }u,d\nu\rangle=0.$$
	Therefore
	$$\int_{\Omega}\sum_{k=1}^{N_{1}}|X_{k}u|^{p}d\nu=0,$$
	that is,
	$X_{k}u=0,\,\, k=1,...,N_{1}.$ Since any element of a Jacobian basis of $\mathbb{G}$ is represented by Lie brackets of $\{X_{1},...,X_{N_{1}}\}$, we obtain that
	$u$ is a constant, so $u\equiv0$ on $\Omega$ by \eqref{u0}.
\end{proof}

This has the following simple extension to (nonlinear) Schr\"odinger operators:

\begin{cor}\label{schr}
	Let $q:\mathbb{C}\times\Omega\rightarrow \mathbb{R}$ be a non-negative bounded function.
	Then the Dirichlet boundary value problem for the (nonlinear) Schr\"odinger equation
	\begin{equation}\label{Su0}
	-\mathcal{L}_{p}u(x)+q(u,x)u(x)=0,\,\, x\in\Omega\subset\mathbb{G},
	\end{equation}
	\begin{equation}\label{s0}
	u(x)=0,\,\, x\in\partial\Omega,
	\end{equation}
	has the unique trivial solution
	$u\equiv0$ in the class of functions $C^{2}(\Omega)\bigcap C^{1}(\overline{\Omega})$.
\end{cor}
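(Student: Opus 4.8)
The plan is to imitate the proof of Corollary~\ref{uniqueness}, the only new ingredient being the sign of the potential $q$. First I would apply Green's first identity~\eqref{g1} of Proposition~\ref{green1} with $v=\overline{u}$; this is legitimate since $u\in C^{2}(\Omega)\cap C^{1}(\overline{\Omega})$, and the boundary integral on the right-hand side vanishes thanks to the Dirichlet condition~\eqref{s0}. This yields
$$\int_{\Omega}\left((|\nabla_{\mathbb{G}}u|^{p-2}\mathcal{\widetilde{\nabla}}\overline{u})u+\overline{u}\,\mathcal{L}_{p}u\right)d\nu=\int_{\partial\Omega}|\nabla_{\mathbb{G}}u|^{p-2}\overline{u}\langle\mathcal{\widetilde{\nabla}}u,d\nu\rangle=0.$$

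Next I would substitute $\mathcal{L}_{p}u=q(u,x)u$ from~\eqref{Su0} and unwind the definition of $\mathcal{\widetilde{\nabla}}$. Since
$$(|\nabla_{\mathbb{G}}u|^{p-2}\mathcal{\widetilde{\nabla}}\overline{u})u=|\nabla_{\mathbb{G}}u|^{p-2}\sum_{k=1}^{N_{1}}(X_{k}\overline{u})(X_{k}u)=|\nabla_{\mathbb{G}}u|^{p},$$
the displayed identity becomes
$$\int_{\Omega}|\nabla_{\mathbb{G}}u|^{p}d\nu+\int_{\Omega}q(u,x)|u|^{2}d\nu=0.$$
Both integrands are non-negative, the first trivially and the second because $q\geq0$, so each of the two integrals must vanish separately. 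In particular $\int_{\Omega}|\nabla_{\mathbb{G}}u|^{p}d\nu=0$, hence $X_{k}u\equiv0$ in $\Omega$ for every $k=1,\ldots,N_{1}$; since the iterated commutators of $X_{1},\ldots,X_{N_{1}}$ span the Lie algebra of $\mathbb{G}$ by the H\"ormander condition, $u$ is constant on $\Omega$, and then~\eqref{s0} forces $u\equiv0$, as required.

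There is no serious obstacle here --- this is precisely the kind of statement that Green's first identity is designed to make elementary. The one point that needs care is the sign of the zeroth-order term: the argument relies on $q(u,x)\geq0$ together with $\overline{u}u=|u|^{2}\geq0$ to guarantee $\int_{\Omega}q(u,x)\overline{u}u\,d\nu\geq0$; if $q$ were allowed to change sign, the right-hand side could be negative and one would need an additional smallness or spectral-gap condition instead. I would also remark, in the spirit of the comment preceding Corollary~\ref{uniqueness}, that the same computation covers Neumann and Robin problems: one simply retains the boundary term $\int_{\partial\Omega}|\nabla_{\mathbb{G}}u|^{p-2}\overline{u}\langle\mathcal{\widetilde{\nabla}}u,d\nu\rangle$ and exploits the sign of the Robin coefficient.
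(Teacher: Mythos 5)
Your proposal is correct and follows essentially the same route as the paper: apply Green's first identity \eqref{g1} with $v=\overline{u}$, use the equation and the Dirichlet condition to reduce to $\int_{\Omega}|\nabla_{\mathbb{G}}u|^{p}d\nu+\int_{\Omega}q(u,x)|u|^{2}d\nu=0$, and conclude from the non-negativity of both terms that $\nabla_{\mathbb{G}}u\equiv0$, hence $u$ is constant and vanishes by the boundary condition. The only (harmless) difference is cosmetic: you write the Dirichlet term as $\int_{\Omega}|\nabla_{\mathbb{G}}u|^{p}d\nu$ and spell out the constancy step via the H\"ormander condition, which the paper compresses.
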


\begin{proof}[Proof of Corollary \ref{schr}.]
	As in proof of Corollary  \ref{uniqueness} using Green's identity, from \eqref{Su0} and \eqref{s0} we obtain
	\begin{multline*}
	\int_{\Omega}(|\nabla_{\mathbb{G}}u|^{p-2}
	\mathcal{\widetilde{\nabla} }\overline{u}) u d\nu=\int_{\Omega}\left((|\nabla_{\mathbb{G}}u|^{p-2}
	\mathcal{\widetilde{\nabla} }\overline{u})  u
	+\overline{u}\mathcal{L}_{p}u\right)d\nu
	-\int_{\Omega}q(u,y)|u(y)|^{2}d\nu \\
	=\int_{\partial\Omega}|\nabla_{\mathbb{G}}u|^{p-2}\overline{u}\langle \mathcal{\widetilde{\nabla} }u,d\nu\rangle
	-\int_{\Omega}q(u,y)|u(y)|^{2}d\nu=
	-\int_{\Omega}q(u,y)|u(y)|^{2}d\nu.
	\end{multline*}
	Therefore,
	$$0\leq \int_{\Omega}\sum_{k=1}^{N_{1}}|X_{k}u|^{p}d\nu=-\int_{\Omega}q(u,y)|u(y)|^{2}d\nu\leq 0,$$
	that is, $u\equiv0$.
\end{proof}

Similarly, we obtain the following fact for the new measure-type von Neumann boundary conditions:

\begin{cor}\label{neumann}
	The boundary value problem
	\begin{equation}\label{Nu0}
	\mathcal{L}_{p}u(x)=0,\,\, x\in\Omega\subset\mathbb{G},
	\end{equation}
	\begin{equation}\label{n0}
	\sum_{j=1}^{N_{1}}X_{j}u\langle X_{j} ,d\nu\rangle=0\;\textrm{ on }\;\partial\Omega,
	\end{equation}
	has a solution
	$u\equiv \text{const}$ in the class of functions $C^{2}(\Omega)\bigcap C^{1}(\overline{\Omega})$.
\end{cor}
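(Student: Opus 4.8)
The plan is to copy, essentially verbatim, the strategy used in the proofs of Corollaries \ref{uniqueness} and \ref{schr}: apply Green's first identity from Proposition \ref{green1} with the test function $v=\overline{u}$ (equivalently $v=u$, since the functions involved are real-valued). The whole point is that the measure-type Neumann condition \eqref{n0} says exactly that the boundary integrand appearing on the right-hand side of \eqref{g1} vanishes, once one observes that $\langle \mathcal{\widetilde{\nabla}}u,d\nu\rangle=\sum_{j=1}^{N_{1}}(X_{j}u)\langle X_{j},d\nu\rangle$.

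First I would substitute $v=\overline{u}$ into \eqref{g1}. The term $\int_{\Omega}\overline{u}\,\mathcal{L}_{p}u\,d\nu$ drops out by \eqref{Nu0}, and the boundary integral $\int_{\partial\Omega}|\nabla_{\mathbb{G}}u|^{p-2}\overline{u}\langle\mathcal{\widetilde{\nabla}}u,d\nu\rangle$ vanishes by \eqref{n0}. This leaves $\int_{\Omega}(|\nabla_{\mathbb{G}}u|^{p-2}\mathcal{\widetilde{\nabla}}\overline{u})u\,d\nu=0$. Next I would unwind the integrand: $(|\nabla_{\mathbb{G}}u|^{p-2}\mathcal{\widetilde{\nabla}}\overline{u})u=|\nabla_{\mathbb{G}}u|^{p-2}\sum_{k=1}^{N_{1}}(X_{k}\overline{u})(X_{k}u)=|\nabla_{\mathbb{G}}u|^{p}$, so that $\int_{\Omega}|\nabla_{\mathbb{G}}u|^{p}\,d\nu=0$ and hence $X_{k}u\equiv 0$ for all $k=1,\dots,N_{1}$. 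As in the proof of Corollary \ref{uniqueness}, since any element of a Jacobian basis of $\mathbb{G}$ is a Lie bracket of $\{X_{1},\dots,X_{N_{1}}\}$, the Hörmander condition then forces every left-invariant derivative of $u$ to vanish, so $u$ is constant on $\Omega$ (on each connected component, if $\Omega$ is disconnected). Conversely, a constant function trivially satisfies both \eqref{Nu0} and \eqref{n0}, which gives the stated conclusion.

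I do not expect any real obstacle here beyond the bookkeeping already performed in Corollaries \ref{uniqueness}–\ref{schr}. The only points worth flagging are: (i) unlike the Dirichlet case, there is no boundary trace pinning the constant to zero, which is why the conclusion reads $u\equiv\mathrm{const}$ rather than $u\equiv 0$; and (ii) the passage from $X_{k}u\equiv 0$ to $u$ being a single constant uses connectedness of $\Omega$, so if one wants the statement for a general admissible domain one should phrase it componentwise.
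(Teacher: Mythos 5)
Your proposal is correct and follows essentially the same route as the paper: substitute $v=\overline{u}$ into Green's first identity \eqref{g1}, use \eqref{Nu0} and \eqref{n0} to kill the $\overline{u}\mathcal{L}_{p}u$ term and the boundary integral, conclude $\int_{\Omega}\sum_{k=1}^{N_{1}}|X_{k}u|^{p}d\nu=0$, and deduce $u\equiv\mathrm{const}$ from the Lie bracket generation of the Jacobian basis. Your added remarks on connectedness and the trivial converse are fine but not needed beyond what the paper does.
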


\begin{proof}[Proof of Corollary \ref{neumann}.]
	Set $v=\overline{u}$ in \eqref{g1}, then by \eqref{Nu0} and \eqref{n0} we get
	
	$$\int_{\Omega}(|\nabla_{\mathbb{G}}u|^{p-2}\mathcal{\widetilde{\nabla} }\overline{u}) u d\nu=\int_{\Omega}\left((|\nabla_{\mathbb{G}}u|^{p-2}
	\mathcal{\widetilde{\nabla} }\overline{u}) u+\overline{u}\mathcal{L}_{p}u\right)d\nu
	=\int_{\partial\Omega}|\nabla_{\mathbb{G}}u|^{p-2}\overline{u}\langle \mathcal{\widetilde{\nabla} }u,d\nu\rangle=0.$$
	Therefore,
	$$\int_{\Omega}\sum_{k=1}^{N_{1}}|X_{k}u|^{p}d\nu=0,$$
	that is,
	$X_{k}u=0,\,\, k=1,...,N_{1}.$ Since any element of a Jacobian basis of $\mathbb{G}$ is represented by Lie brackets of $\{X_{1},...,X_{N_{1}}\}$, we obtain that
	$u$ is a constant.
\end{proof}

In the same way one can consider the Robin-type boundary conditions as follows.
\begin{cor}\label{robin}
	Let $a_{j}:\partial\Omega\rightarrow {\mathbb R},\,\,j=1,...,N_{1},$ be bounded functions
	such that the measure
	\begin{equation}\label{m1}
	\sum_{j=1}^{N_1}a_j\langle X_{j} ,d\nu\rangle\geq 0
	\end{equation}
	is non-negative on $\partial\Omega$.
	Then the boundary value problem
	\begin{equation}\label{Ru0}
	\mathcal{L}_{p}u(x)=0,\,\, x\in\Omega\subset\mathbb{G},
	\end{equation}
	\begin{equation}\label{r0}
	\sum_{j=1}^{N_{1}}(a_{j}u+X_{j}u)\langle X_{j} ,d\nu\rangle=0\; \textrm{ on }\; \partial\Omega,\,\,
	\end{equation}
	has a solution
	$u\equiv const$ in the class of functions $C^{2}(\Omega)\bigcap C^{1}(\overline{\Omega})$.
\end{cor}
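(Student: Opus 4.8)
The plan is to mimic the proofs of Corollaries \ref{uniqueness} and \ref{neumann}, applying Green's first identity \eqref{g1} with the test function $v=\overline{u}$ (or simply $v=u$, since we are working with real-valued solutions). First I would substitute $v=\overline{u}$ into \eqref{g1} and use the fact that $\mathcal{L}_{p}u=0$ in $\Omega$ from \eqref{Ru0} to kill the term $\overline{u}\,\mathcal{L}_{p}u$, obtaining
$$\int_{\Omega}(|\nabla_{\mathbb{G}}u|^{p-2}\mathcal{\widetilde{\nabla}}\overline{u})\,u\,d\nu=\int_{\partial\Omega}|\nabla_{\mathbb{G}}u|^{p-2}\overline{u}\langle\mathcal{\widetilde{\nabla}}u,d\nu\rangle.$$
Since $(|\nabla_{\mathbb{G}}u|^{p-2}\mathcal{\widetilde{\nabla}}\overline{u})\,u=|\nabla_{\mathbb{G}}u|^{p-2}\sum_{k=1}^{N_{1}}|X_{k}u|^{2}=\sum_{k=1}^{N_{1}}|X_{k}u|^{p}$ (treating $u$ as real, or at least noting the quantity is nonnegative), the left-hand side is $\int_{\Omega}|\nabla_{\mathbb{G}}u|^{p}d\nu\geq 0$.

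Next I would rewrite the boundary integral. Using the Robin condition \eqref{r0}, namely $\sum_{j=1}^{N_{1}}(a_{j}u+X_{j}u)\langle X_{j},d\nu\rangle=0$ on $\partial\Omega$, we have on the boundary
$$\sum_{j=1}^{N_{1}}X_{j}u\,\langle X_{j},d\nu\rangle=-\sum_{j=1}^{N_{1}}a_{j}u\,\langle X_{j},d\nu\rangle,$$
so that, recalling $\langle\mathcal{\widetilde{\nabla}}u,d\nu\rangle=\sum_{j=1}^{N_{1}}X_{j}u\,\langle X_{j},d\nu\rangle$,
$$\int_{\partial\Omega}|\nabla_{\mathbb{G}}u|^{p-2}\overline{u}\langle\mathcal{\widetilde{\nabla}}u,d\nu\rangle=-\int_{\partial\Omega}|\nabla_{\mathbb{G}}u|^{p-2}|u|^{2}\sum_{j=1}^{N_{1}}a_{j}\langle X_{j},d\nu\rangle\leq 0,$$
where the last inequality is exactly the hypothesis \eqref{m1} that the measure $\sum_{j=1}^{N_{1}}a_{j}\langle X_{j},d\nu\rangle$ is nonnegative on $\partial\Omega$ (and $|\nabla_{\mathbb{G}}u|^{p-2}|u|^{2}\geq 0$).

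Combining the two displays, $\int_{\Omega}\sum_{k=1}^{N_{1}}|X_{k}u|^{p}d\nu\leq 0$, hence $=0$, forcing $X_{k}u=0$ in $\Omega$ for all $k=1,\ldots,N_{1}$. By the Hörmander condition every element of a Jacobian basis of $\mathbb{G}$ is a Lie-bracket combination of $X_{1},\ldots,X_{N_{1}}$, so all derivatives of $u$ vanish and $u\equiv\text{const}$; one then checks directly that any constant satisfies \eqref{Ru0} trivially and satisfies \eqref{r0} provided the measure in \eqref{m1} is actually zero (or, more precisely, a constant is a solution whenever $c\sum_j a_j\langle X_j,d\nu\rangle=0$ on $\partial\Omega$, which holds for $c=0$ and is the content of the claim that $u\equiv\text{const}$ is \emph{a} solution). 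The only mildly delicate point — and the one I would be most careful about — is the sign bookkeeping in the boundary term and the interpretation of \eqref{m1} as a signed measure on the oriented boundary $\partial\Omega$; everything else is a verbatim repetition of the argument already used for the Dirichlet and Neumann cases.
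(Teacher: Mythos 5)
Your argument is correct and is essentially identical to the paper's own proof: apply Green's first identity \eqref{g1} with $v=\overline{u}$, use \eqref{Ru0} to drop the $\overline{u}\mathcal{L}_{p}u$ term, rewrite the boundary integral via the Robin condition \eqref{r0} and the sign hypothesis \eqref{m1} to get a nonpositive right-hand side, and conclude $X_{k}u=0$ for all $k$ hence $u\equiv\mathrm{const}$ by the H\"ormander/Jacobian-basis argument. The closing remark about when a nonzero constant actually satisfies \eqref{r0} is a reasonable extra observation but not part of the paper's proof.
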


\begin{proof}[Proof of Corollary \ref{robin}.]
	Set $v=\overline{u}$ in \eqref{g1}, then by \eqref{Ru0} and \eqref{r0} we get
	\begin{multline}\label{Robin1}
\int_{\Omega}(|\nabla_{\mathbb{G}}u|^{p-2}\mathcal{\widetilde{\nabla} }\overline{u}) u d\nu=\int_{\Omega}\left((|\nabla_{\mathbb{G}}u|^{p-2}
\mathcal{\widetilde{\nabla} }\overline{u}) u+\overline{u}\mathcal{L}_{p}u\right)d\nu
 \\
=\int_{\partial\Omega}|\nabla_{\mathbb{G}}u|^{p-2}\overline{u}\langle \mathcal{\widetilde{\nabla} }u,d\nu\rangle
	=-\int_{\partial\Omega}|\nabla_{\mathbb{G}}u|^{p-2}|u|^{2}\sum_{j=1}^{N_{1}}a_{j}\langle X_{j} ,d\nu\rangle,
	\end{multline}
	that is,
	$$0\leq\int_{\Omega}\sum_{k=1}^{N_{1}}|X_{k}u|^{p}d\nu=-
	\int_{\partial\Omega}|\nabla_{\mathbb{G}}u|^{p-2}|u|^{2}\sum_{j=1}^{N_{1}}a_{j}\langle X_{j} ,d\nu\rangle\leq 0.$$
	Therefore
	$$\int_{\Omega}\sum_{k=1}^{N_{1}}|X_{k}u|^{p}d\nu=0$$
	and
	$$\int_{\partial\Omega}|\nabla_{\mathbb{G}}u|^{p-2}|u|^{2}\sum_{j=1}^{N_{1}}a_{j}\langle X_{j} ,d\nu\rangle=0.$$
	As above the first equality implies that $u$ is a constant. This proves the claim.
\end{proof}

We can also consider boundary value problems where Dirichlet or Robin boundary conditions are imposed on different parts of the boundary. The proof is similar to the above cases.

\begin{cor}\label{DN}
	Let $a_{j}:\partial\Omega\rightarrow {\mathbb R},\,\,j=1,...,N_{1},$ be bounded functions
	such that the measure
	\begin{equation}\label{m1a}
	\sum_{j=1}^{N_1}a_j\langle X_{j} ,d\nu\rangle\geq 0
	\end{equation}
	is non-negative on $\partial\Omega$.
	Let $\partial\Omega_{1}\subset \partial\Omega$, $\partial\Omega_{1}\not= \emptyset$ and  $\partial\Omega_{2}:= \partial\Omega\backslash\partial\Omega_{1}$.
	Then the boundary value problem
	\begin{equation}\label{DNu0}
	\mathcal{L}_{p}u(x)=0,\,\, x\in\Omega\subset\mathbb{G},
	\end{equation}
	\begin{equation}\label{dn0}
	u=0 \;\textrm{ on }\; \partial\Omega_{1},
	\end{equation}
	\begin{equation}\label{dn01}
	\sum_{j=1}^{N_{1}}(a_{j}u+X_{j}u)\langle X_{j} ,d\nu\rangle=0 \;\textrm{ on }\; \partial\Omega_{2},\,\,
	\end{equation}
	has the unique trivial solution
	$u\equiv 0$ in the class of functions $C^{2}(\Omega)\bigcap C^{1}(\overline{\Omega})$.
\end{cor}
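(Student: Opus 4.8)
The plan is to mimic the proofs of Corollaries \ref{uniqueness}--\ref{robin}, the only new feature being that the boundary integral must now be split according to $\partial\Omega=\partial\Omega_{1}\cup\partial\Omega_{2}$, with a different boundary condition invoked on each piece. So suppose $u\in C^{2}(\Omega)\cap C^{1}(\overline{\Omega})$ solves \eqref{DNu0}--\eqref{dn01}. First I would substitute $v=\overline{u}$ into Green's first identity \eqref{g1}. Since $\mathcal{L}_{p}u=0$ in $\Omega$ by \eqref{DNu0}, the term $\overline{u}\mathcal{L}_{p}u$ drops out and the left-hand side reduces to $\int_{\Omega}(|\nabla_{\mathbb{G}}u|^{p-2}\mathcal{\widetilde{\nabla}}\overline{u})u\,d\nu=\int_{\Omega}\sum_{k=1}^{N_{1}}|X_{k}u|^{p}d\nu$, so it only remains to show that the right-hand side $\int_{\partial\Omega}|\nabla_{\mathbb{G}}u|^{p-2}\overline{u}\langle\mathcal{\widetilde{\nabla}}u,d\nu\rangle$ is $\le 0$.

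Next I would split this boundary integral over $\partial\Omega_{1}$ and $\partial\Omega_{2}$. On $\partial\Omega_{1}$ the Dirichlet condition \eqref{dn0} gives $u=0$, so that part of the integral vanishes identically. On $\partial\Omega_{2}$ I would write $\langle\mathcal{\widetilde{\nabla}}u,d\nu\rangle=\sum_{j=1}^{N_{1}}(X_{j}u)\langle X_{j},d\nu\rangle$ and use the Robin-type condition \eqref{dn01} in the form $\sum_{j=1}^{N_{1}}(X_{j}u)\langle X_{j},d\nu\rangle=-\sum_{j=1}^{N_{1}}a_{j}u\langle X_{j},d\nu\rangle$, which turns the remaining boundary contribution into
\[
-\int_{\partial\Omega_{2}}|\nabla_{\mathbb{G}}u|^{p-2}|u|^{2}\sum_{j=1}^{N_{1}}a_{j}\langle X_{j},d\nu\rangle,
\]
exactly the expression appearing in \eqref{Robin1}. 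By hypothesis \eqref{m1a} the measure $\sum_{j=1}^{N_{1}}a_{j}\langle X_{j},d\nu\rangle$ is non-negative on $\partial\Omega$, hence on $\partial\Omega_{2}\subset\partial\Omega$, so this quantity is $\le 0$.

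Putting the two pieces together yields $0\le\int_{\Omega}\sum_{k=1}^{N_{1}}|X_{k}u|^{p}d\nu\le 0$, so $X_{k}u\equiv 0$ on $\Omega$ for all $k=1,\dots,N_{1}$. Since by the H\"ormander condition the iterated commutators of $X_{1},\dots,X_{N_{1}}$ span the Lie algebra of $\mathbb{G}$, this forces $u$ to be constant on $\Omega$, and by continuity on $\overline{\Omega}$. Finally --- and this is the only place where the assumption $\partial\Omega_{1}\ne\emptyset$ is used --- the Dirichlet condition \eqref{dn0} pins this constant down to $0$, so $u\equiv 0$, which gives uniqueness of the trivial solution.

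I do not expect any genuine obstacle: the argument is a routine bookkeeping variant of the preceding corollaries, and it even covers the degenerate case $\partial\Omega_{2}=\emptyset$ (pure Dirichlet). The one point deserving a little care is the sign in the surviving $\partial\Omega_{2}$ integral: one must check that feeding \eqref{dn01} into the right-hand side of \eqref{g1} produces precisely $-|\nabla_{\mathbb{G}}u|^{p-2}|u|^{2}\sum_{j}a_{j}\langle X_{j},d\nu\rangle$, so that non-negativity of the measure in \eqref{m1a} delivers a non-positive boundary term rather than the wrong sign; this is the identical manipulation already carried out in the proof of Corollary \ref{robin}, so I would simply refer back to \eqref{Robin1}.
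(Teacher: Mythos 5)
Your argument is correct and is exactly what the paper intends: the paper omits the proof of Corollary \ref{DN}, remarking only that it is ``similar to the above cases,'' and your combination of the Dirichlet argument of Corollary \ref{uniqueness} on $\partial\Omega_{1}$ with the Robin computation \eqref{Robin1} of Corollary \ref{robin} on $\partial\Omega_{2}$, followed by the constancy argument and the use of $\partial\Omega_{1}\neq\emptyset$ to force the constant to vanish, is precisely that intended proof. No gaps beyond the minor notational conflation of $|\nabla_{\mathbb{G}}u|^{p}$ with $\sum_{k}|X_{k}u|^{p}$, which the paper itself also makes and which does not affect the conclusion $X_{k}u\equiv 0$.
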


\medskip
As a consequence of the Green's first identity \eqref{g1} we obtain the following analogue of Green's second identity for the $p$-sub-Laplacian:
\begin{prop}[Green's second identity]
	\label{green2}
	Let $1<p<\infty.$ Let $\Omega\subset\mathbb{G}$ be an admissible domain. Let $u,v\in C^{2}(\Omega)\bigcap C^{1}(\overline{\Omega}).$ Then
	\begin{multline}\label{g2}
	\int_{\Omega}
	\left(u\mathcal{L}_{p}v-v\mathcal{L}_{p}u
	+(|\nabla_{\mathbb{G}}v|^{p-2}-|\nabla_{\mathbb{G}}u|^{p-2})(
	\mathcal{\widetilde{\nabla}}v) u\right)d\nu\\
	=\int_{\partial\Omega}(|\nabla_{\mathbb{G}}v|^{p-2}u\langle\widetilde{\nabla}  v,d\nu\rangle-|\nabla_{\mathbb{G}}u|^{p-2}v\langle \widetilde{\nabla}  u,d\nu\rangle).
	\end{multline}
\end{prop}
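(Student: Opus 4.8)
The plan is to derive Green's second identity by subtracting two instances of Green's first identity (Proposition \ref{green1}) against each other, exploiting the symmetry of the bilinear-like pairing that appears. More precisely, I would first apply \eqref{g1} with the roles of $u$ and $v$ as stated: since $v\in C^{2}(\Omega)\cap C^{1}(\overline\Omega)$ qualifies as the ``$C^{2}$'' function and $u\in C^{2}(\Omega)\cap C^{1}(\overline\Omega)$ certainly qualifies as the ``$C^{1}$'' function, we may swap their names in Proposition \ref{green1} and obtain
\begin{equation}\label{g1swap}
\int_{\Omega}\left((|\nabla_{\mathbb{G}}v|^{p-2}\mathcal{\widetilde{\nabla}}u) v+u\mathcal{L}_{p}v\right) d\nu=\int_{\partial\Omega}|\nabla_{\mathbb{G}}v|^{p-2}u\langle \mathcal{\widetilde{\nabla}}v,d\nu\rangle.
\end{equation}
Then I would also record \eqref{g1} itself in its original form. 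Subtracting \eqref{g1} from \eqref{g1swap} immediately produces the right-hand side of \eqref{g2} as a boundary integral, and on the left-hand side leaves
$$\int_{\Omega}\left(u\mathcal{L}_{p}v - v\mathcal{L}_{p}u + (|\nabla_{\mathbb{G}}v|^{p-2}\mathcal{\widetilde{\nabla}}u)v - (|\nabla_{\mathbb{G}}u|^{p-2}\mathcal{\widetilde{\nabla}}v)u\right)d\nu.$$

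The one genuinely non-obvious step, and the place I expect the main (albeit mild) obstacle, is to identify the cross-terms $(|\nabla_{\mathbb{G}}v|^{p-2}\mathcal{\widetilde{\nabla}}u)v - (|\nabla_{\mathbb{G}}u|^{p-2}\mathcal{\widetilde{\nabla}}v)u$ with the expression $(|\nabla_{\mathbb{G}}v|^{p-2}-|\nabla_{\mathbb{G}}u|^{p-2})(\mathcal{\widetilde{\nabla}}v)u$ that appears under the integral in \eqref{g2}. For this I would unwind the notation $\mathcal{\widetilde{\nabla}}$: here $(|\nabla_{\mathbb{G}}u|^{p-2}\mathcal{\widetilde{\nabla}}v)u$ is shorthand for $|\nabla_{\mathbb{G}}u|^{p-2}\sum_{k=1}^{N_1}(X_k v)(X_k u)$, i.e.\ $|\nabla_{\mathbb{G}}u|^{p-2}\,\langle\nabla_{\mathbb{G}}u,\nabla_{\mathbb{G}}v\rangle$, and similarly $(|\nabla_{\mathbb{G}}v|^{p-2}\mathcal{\widetilde{\nabla}}u)v = |\nabla_{\mathbb{G}}v|^{p-2}\sum_{k=1}^{N_1}(X_k u)(X_k v)$. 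The key observation is that the scalar factor $\sum_{k=1}^{N_1}(X_k u)(X_k v)$ is symmetric in $u$ and $v$, so the difference of the two cross-terms is exactly $(|\nabla_{\mathbb{G}}v|^{p-2}-|\nabla_{\mathbb{G}}u|^{p-2})\sum_{k=1}^{N_1}(X_k u)(X_k v)$, which is precisely $(|\nabla_{\mathbb{G}}v|^{p-2}-|\nabla_{\mathbb{G}}u|^{p-2})(\mathcal{\widetilde{\nabla}}v)u$ in the paper's notation.

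Having matched left- and right-hand sides, the proof is complete. I would therefore present it in three short moves: (1) state \eqref{g1} and its $u\leftrightarrow v$ swap \eqref{g1swap}, noting that both invocations of Proposition \ref{green1} are legitimate under the stated regularity $u,v\in C^{2}(\Omega)\cap C^{1}(\overline{\Omega})$; (2) subtract; (3) use the symmetry of $\sum_k (X_k u)(X_k v)$ to rewrite the cross-terms, obtaining \eqref{g2}. No further estimates, approximation arguments, or boundary regularity considerations are needed beyond what is already built into Proposition \ref{green1} and the admissibility of $\Omega$, so there is essentially no serious obstacle; the only care required is bookkeeping of the nonlinear weights $|\nabla_{\mathbb{G}}u|^{p-2}$ and $|\nabla_{\mathbb{G}}v|^{p-2}$, which are \emph{not} symmetric and hence cannot be factored out before subtracting — that asymmetry is the whole reason the extra volume term $(|\nabla_{\mathbb{G}}v|^{p-2}-|\nabla_{\mathbb{G}}u|^{p-2})(\mathcal{\widetilde{\nabla}}v)u$ survives in \eqref{g2}, in contrast to the classical linear case $p=2$ where it vanishes.
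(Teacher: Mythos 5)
Your argument is correct and is essentially the paper's own proof: you write down Green's first identity \eqref{g1} twice with the roles of $u$ and $v$ interchanged, subtract, and use the symmetry $(\mathcal{\widetilde{\nabla}}u)v=(\mathcal{\widetilde{\nabla}}v)u$, i.e.\ the symmetry of $\sum_{k=1}^{N_1}(X_k u)(X_k v)$, to combine the cross-terms into $(|\nabla_{\mathbb{G}}v|^{p-2}-|\nabla_{\mathbb{G}}u|^{p-2})(\mathcal{\widetilde{\nabla}}v)u$. No gaps; your remark about the non-symmetric weights explaining the surviving volume term is a correct observation consistent with the paper.
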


\begin{proof}[Proof of Proposition \ref{green2}.]
	Rewriting \eqref{g1} we have
$$\int_{\Omega}\left((|\nabla_{\mathbb{G}}v|^{p-2}
\mathcal{\widetilde{\nabla}}u) v+u\mathcal{L}_{p}v\right) d\nu=\int_{\partial\Omega}|\nabla_{\mathbb{G}}v|^{p-2}u\langle \mathcal{\widetilde{\nabla} }v,d\nu\rangle,$$

		$$\int_{\Omega}\left((|\nabla_{\mathbb{G}}u|^{p-2}\mathcal{\widetilde{\nabla}}v) u+v\mathcal{L}_{p}u\right) d\nu=\int_{\partial\Omega}|\nabla_{\mathbb{G}}u|^{p-2}v\langle \mathcal{\widetilde{\nabla} }u,d\nu\rangle.$$
	By subtracting the second identity from the first one
	and using $$(\mathcal{\widetilde{\nabla} }u) v=(\mathcal{\widetilde{\nabla} }v) u$$
	we obtain the desired result.
\end{proof}

It is known that the sub-Laplacian ($p=2$) has a unique fundamental solution
$\varepsilon$ on $\mathbb{G}$ (see \cite{Fol75}),
$$
\mathcal{L}_{2}\varepsilon=\delta,
$$
and $\varepsilon(x,y)=\varepsilon(y^{-1}x)$ is homogeneous of degree $-Q+2$ and represented in the form
\begin{equation}\label{fundsol}
\varepsilon(x,y)= [d(x,y)]^{2-Q},
\end{equation}
where $d$ is the $\mathcal{L}$-gauge.

One of the largest classes of the stratified Lie groups, for which the fundamental solution of the $p$-sub-Laplacian is expressed explicitly are polarizable Carnot groups. A Lie group $\mathbb{G}$ is called a polarizable Carnot group if the $\mathcal{L}$-gauge $d$ satisfies the following $\infty$-sub-Laplacian equality  
$$\mathcal{L_{\infty}}d:=
\frac{1}{2}\nabla_{\mathbb{G}}|\nabla_{\mathbb{G}}d|^{2}\cdot \nabla_{\mathbb{G}}d=0\quad\text{in}\;\mathbb{G}\backslash \{0\}.$$
In the paper \cite{BT02} it was proved that if $\mathbb{G}$ is a polarizable Carnot group, then the fundamental solutions of the $p$-sub-Laplacian \eqref{pLap} are given by the explicit formulae 
\begin{equation}
\varepsilon_{p}:=\left\{
\begin{array}{ll}
c_{p}d^{\frac{p-Q}{p-1}}, \quad\text{if}\; p\not=Q,\\
-c_{Q}\log d, \quad\text{if}\; p=Q. \\
\end{array}
\right.
\end{equation} 
As usual, the Green identities are still valid for functions with (weak) singularities provided we can approximate them by smooth functions. 
Thus, for example,  for $x\in\Omega$ in a polarizable Carnot group, taking $v=1$ and $u(y)=\varepsilon_{p}(x,y)$ we record
the following consequence of Proposition \ref{green1}:
	If $\Omega$ is an admissible domain of a polarizable Carnot group $\mathbb{G}$, and $x\in\Omega$, then
	$$\int_{\partial\Omega}|\nabla_{\mathbb{G}}\varepsilon_{p}
	|^{p-2}\langle \mathcal{\widetilde{\nabla} }\varepsilon_{p}(x,y),d\nu(y)\rangle=1,$$
	where $\varepsilon_{p}$ is the fundamental solution of the $p$-sub-Laplacian.

For the polarizable Carnot groups putting the fundamental solution $\varepsilon_{p}$ instead of $v$ in \eqref{g2} we get the following representation type formulae:

\begin{itemize}
	\item
	Let $u\in C^{2}(\Omega)\bigcap C^{1}(\overline{\Omega})$. Then for $x\in\Omega$ we have
	\begin{multline}\label{rep}
	u(x)=\int_{\Omega}\varepsilon_{p}\mathcal{L}_{p}u
	-(|\nabla_{\mathbb{G}}\varepsilon_{p}|
	^{p-2}-|\nabla_{\mathbb{G}}u|^{p-2})(
	\mathcal{\widetilde{\nabla}}\varepsilon_{p}) ud\nu\\
	+\int_{\partial\Omega}(|\nabla_{\mathbb{G}}\varepsilon_{p}
	|^{p-2}u\langle\widetilde{\nabla}  \varepsilon_{p},d\nu\rangle-|\nabla_{\mathbb{G}}u|^{p-2}\varepsilon_{p}\langle \widetilde{\nabla}  u,d\nu\rangle).
	\end{multline}
\item
	Let $u\in C^{2}(\Omega)\bigcap C^{1}(\overline{\Omega})$ and $\mathcal{L}_{p}u=0$ on $\Omega$, then for $x\in\Omega$ we have
	\begin{multline}\label{rep}
	u(x)=\int_{\Omega}
	(|\nabla_{\mathbb{G}}u|^{p-2}-|\nabla_{\mathbb{G}}\varepsilon_{p}|
	^{p-2})(
	\mathcal{\widetilde{\nabla}}\varepsilon_{p}) ud\nu\\
	+\int_{\partial\Omega}(|\nabla_{\mathbb{G}}\varepsilon_{p}
	|^{p-2}u\langle\widetilde{\nabla}  \varepsilon_{p},d\nu\rangle-|\nabla_{\mathbb{G}}u|^{p-2}\varepsilon_{p}\langle \widetilde{\nabla}  u,d\nu\rangle).
	\end{multline}
\item
	Let $u\in C^{2}(\Omega)\bigcap C^{1}(\overline{\Omega})$ and
	\begin{equation}\label{D1}
	u(x)=0,\,\, x\in\partial\Omega,
	\end{equation}
	then
	\begin{equation}
	u(x)=\int_{\Omega}\varepsilon_{p}\mathcal{L}_{p}u
	-(|\nabla_{\mathbb{G}}\varepsilon_{p}|
	^{p-2}-|\nabla_{\mathbb{G}}u|^{p-2})(
	\mathcal{\widetilde{\nabla}}\varepsilon_{p}) ud\nu-\int_{\partial\Omega}|\nabla_{\mathbb{G}}u|^{p-2}\varepsilon_{p}\langle \widetilde{\nabla}  u,d\nu\rangle.
	\end{equation}
\item
	Let $u\in C^{2}(\Omega)\bigcap C^{1}(\overline{\Omega})$ and
	\begin{equation}\label{n1}
	\sum_{j=1}^{N_{1}}X_{j}u\langle X_{j} ,d\nu\rangle=0 \;\textrm{ on }\; \partial\Omega,
	\end{equation}
	then
	\begin{equation}\label{rep}
	u(x)=\int_{\Omega}\varepsilon_{p}\mathcal{L}_{p}u
	-(|\nabla_{\mathbb{G}}\varepsilon_{p}|
	^{p-2}-|\nabla_{\mathbb{G}}u|^{p-2})(
	\mathcal{\widetilde{\nabla}}\varepsilon_{p}) ud\nu\\
	+\int_{\partial\Omega}|\nabla_{\mathbb{G}}\varepsilon_{p}
	|^{p-2}u\langle\widetilde{\nabla}  \varepsilon_{p},d\nu\rangle
	.
	\end{equation}
\end{itemize}

Of course, these representation formula expressions hold in general stratified Lie groups provided $\varepsilon_{p}$ exists. However, according to the meaning of the classical cases, one should know the fundamental solution in an explicit form, so we have focused on the polarizable groups. Note that there are stratified Lie groups (other than polarizable ones) in which the fundamental solution of, say, the sub-Laplacian can be expressed explicitly (see \cite[Section 6]{BT02}).

\section{$p$-sub-Laplacian Picone's identity and  consequences}
\label{SEC:2}
By keeping in mind the stratified group discussions from the introduction for any set $\Omega\subset \mathbb{G}$ and a locally Lipschitz function $f:\mathbb{R}^{+}\rightarrow \mathbb{R}^{+}$ such that $ (p-1)|f(t)|^{\frac{p-2}{p-1}}\leq f'(t)$ a.e. in $\mathbb{R}^{+}$ with $1<p<\infty$  we introduce the notations 

\begin{equation}
L(u,v):=|\nabla_{\mathbb G}u|^{p}-p\frac{|u|^{p-2}u}{f(v)}\nabla_{\mathbb G}u\cdot\nabla_{\mathbb G}v |\nabla_{\mathbb G}v|^{p-2}+\frac{f'(v)|u|^{p}}{f^{2}(v)}|\nabla_{\mathbb G}v|^{p} 
\end{equation} 
and
\begin{equation}
R(u,v):=|\nabla_{\mathbb G}u|^{p}-\nabla_{\mathbb G}\left (\frac{|u|^{p}}{f(v)}\right)|\nabla_{\mathbb G}v|^{p-2}\nabla_{\mathbb G}v  
\end{equation} 
a.e. in $\Omega$.
Then we have the following stratified Lie group version of Picone's identity. 
\begin{lem}\label{thm1}
For any set $\Omega\subset \mathbb{G}$ and all $1<p<\infty$ we have 
$L(u,v)=R(u,v)\geq 0$ a.e. in $\Omega$, where $u$ and $v$ are differentiable real-valued  functions.
\end{lem}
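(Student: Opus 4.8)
The plan is to establish the two claims $L(u,v)=R(u,v)$ and $R(u,v)\ge0$ separately: the first is a pointwise algebraic identity obtained from the product and chain rules, and the second follows from the Cauchy--Schwarz inequality, the structural assumption on $f$, and Young's inequality. Since $f$ is only locally Lipschitz, the hypotheses on $f'$ hold merely a.e.\ on $\mathbb{R}^{+}$, which (together with a standard argument controlling the corresponding exceptional set in $\Omega$) accounts for the qualifier ``a.e.\ in $\Omega$''. I also assume, as is implicit in the statement, that $v$ takes values in $\mathbb{R}^{+}$, so that $f(v)>0$ and all quotients make sense.

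For the identity $L=R$, I would first observe that each horizontal field $X_{k}$ is a first-order derivation, so the ordinary Leibniz and chain rules apply; moreover $|u|^{p}$ is $C^{1}$ for every $p>1$. Hence, at a.e.\ point,
$$\nabla_{\mathbb{G}}\left(\frac{|u|^{p}}{f(v)}\right)=\frac{p|u|^{p-2}u}{f(v)}\,\nabla_{\mathbb{G}}u-\frac{f'(v)|u|^{p}}{f^{2}(v)}\,\nabla_{\mathbb{G}}v.$$
Plugging this into the definition of $R(u,v)$ and expanding the inner product against $|\nabla_{\mathbb{G}}v|^{p-2}\nabla_{\mathbb{G}}v$ reproduces exactly the three terms defining $L(u,v)$, which gives $L(u,v)=R(u,v)$ a.e.\ in $\Omega$.

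It remains to prove $L(u,v)\ge0$. Bounding the middle term by $\left|\nabla_{\mathbb{G}}u\cdot\nabla_{\mathbb{G}}v\right|\le|\nabla_{\mathbb{G}}u|\,|\nabla_{\mathbb{G}}v|$, using $\left||u|^{p-2}u\right|=|u|^{p-1}$ and $f(v)>0$, one gets
$$L(u,v)\ge|\nabla_{\mathbb{G}}u|^{p}-\frac{p|u|^{p-1}}{f(v)}\,|\nabla_{\mathbb{G}}u|\,|\nabla_{\mathbb{G}}v|^{p-1}+\frac{f'(v)|u|^{p}}{f^{2}(v)}\,|\nabla_{\mathbb{G}}v|^{p}.$$
The assumption $(p-1)f(v)^{\frac{p-2}{p-1}}\le f'(v)$ together with the identity $\frac{p-2}{p-1}-2=-\frac{p}{p-1}$ yields $\frac{f'(v)}{f^{2}(v)}\ge(p-1)f(v)^{-\frac{p}{p-1}}$. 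Therefore, setting $\alpha:=|\nabla_{\mathbb{G}}u|\ge0$ and $t:=|u|\,|\nabla_{\mathbb{G}}v|\,f(v)^{-\frac{1}{p-1}}\ge0$, the right-hand side above is $\ge\alpha^{p}-p\,\alpha\,t^{p-1}+(p-1)\,t^{p}$, and this is non-negative for all $\alpha,t\ge0$ and $1<p<\infty$ because it is exactly Young's inequality $p\,\alpha\,t^{p-1}\le\alpha^{p}+(p-1)t^{p}$ with conjugate exponents $p$ and $\frac{p}{p-1}$. This proves $L(u,v)=R(u,v)\ge0$ a.e.

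The main thing to be careful about is the measure-theoretic bookkeeping behind ``a.e.\ in $\Omega$'': that the chain rule for $f(v)$ is legitimate although $f$ is merely locally Lipschitz, and that on $\{\nabla_{\mathbb{G}}v=0\}$ both $L$ and $R$ trivially reduce to $|\nabla_{\mathbb{G}}u|^{p}\ge0$; and the exponent arithmetic, which must be arranged so that the structural hypothesis on $f$ converts precisely the last term of $L(u,v)$ into the $(p-1)t^{p}$ needed to close Young's inequality. When $1<p<2$ one should also note that $|u|^{p-2}u$ is only continuous, and not $C^{1}$, where $u=0$; this is harmless since it appears in $L(u,v)$ only as a coefficient and $|u|^{p}$ stays $C^{1}$.
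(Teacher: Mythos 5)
Your proposal is correct and follows essentially the same route as the paper's proof: the same product/quotient-rule computation of $\nabla_{\mathbb{G}}\bigl(|u|^{p}/f(v)\bigr)$ to get $L=R$, then Cauchy--Schwarz on the cross term, Young's inequality with exponents $p$ and $\frac{p}{p-1}$, and the hypothesis $(p-1)|f(t)|^{\frac{p-2}{p-1}}\leq f'(t)$ to conclude $R(u,v)\geq 0$. The extra remarks on the locally Lipschitz chain rule and the set $\{\nabla_{\mathbb{G}}v=0\}$ are sensible refinements but do not change the argument.
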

\begin{proof} [Proof of Lemma \ref{thm1}.]
	A direct calculation shows that
\begin{align*}
\nabla_{\mathbb G}\left (\frac{|u|^{p}}{f(v)}\right) & =  \frac{pf(v)|u|^{p-2}u \nabla_{\mathbb G}u-f'(v)|u|^{p}\nabla_{\mathbb G}v}{f^{2}(v)}
 \\  & = \frac{p|u|^{p-2}u \nabla_{\mathbb G}u}{f(v)}-
\frac{f'(v)|u|^{p}\nabla_{\mathbb G}v}{f^{2}(v)}.
\end{align*}	
Thus,
\begin{align*}R(u,v) & =|\nabla_{\mathbb G}u|^{p}-\nabla_{\mathbb G}\left (\frac{|u|^{p}}{f(v)}\right)|\nabla_{\mathbb G}v|^{p-2}\nabla_{\mathbb G}v
\\ & =|\nabla_{\mathbb G}u|^{p}- \frac{p|u|^{p-2}u}{f(v)}|\nabla_{\mathbb G}v|^{p-2} \nabla_{\mathbb G}u\cdot\nabla_{\mathbb G}v+
\frac{f'(v)|u|^{p}}{f^{2}(v)}|\nabla_{\mathbb G}v|^{p}\\ & = L(u,v).
\end{align*}
Now it remains to show the nonnegativity of $R(u,v)$. We have

\begin{equation*} \frac{p|u|^{p-2}u}{f(v)}|\nabla_{\mathbb G}v|^{p-2} \nabla_{\mathbb G}u\cdot\nabla_{\mathbb G}v\leq \frac{p|u|^{p-1}}{f(v)}|\nabla_{\mathbb G}v|^{p-1} |\nabla_{\mathbb G}u|,\end{equation*}
further by using the Young inequality we arrive at
\begin{equation*} \frac{p|u|^{p-2}u}{f(v)}|\nabla_{\mathbb G}v|^{p-2} \nabla_{\mathbb G}u\cdot\nabla_{\mathbb G}v\leq |\nabla_{\mathbb G}u|^{p}+(p-1)\frac{|u|^{p}|\nabla_{\mathbb G}v|^{p} }{f^{\frac{p}{p-1}}(v)}.\end{equation*}
It follows that 
\begin{equation*} \frac{f'(v)|u|^{p}|\nabla_{\mathbb G}v|^{p}}{f^{2}(v)} -(p-1)\frac{|u|^{p}|\nabla_{\mathbb G}v|^{p} }{f^{\frac{p}{p-1}}(v)}\leq R(u,v) .\end{equation*}
Since by the definition $(p-1)|f(t)|^{\frac{p-2}{p-1}}\leq f'(t)$, 
this means $0\leq R(u,v).$ 
\end{proof}

As a consequence of the Harnack inequality for the general hypoelliptic equation (see \cite[Theorem 3.1]{CDG}) we have the following strong maximum principle for the $p$-sub-Laplacian. The proofs of both Lemma \ref{strongmaxprinc} and  \ref{Hardyineq} are similar to the case of Heisenberg groups (see, \cite{Dou} for more details).  
\begin{lem}\label{strongmaxprinc}
	Let  $\Omega\subset \mathbb{G}$ be a bounded open set, $1<p\leq Q,$ and  let $F:\Omega\times\mathbb{R}\rightarrow\mathbb{R}$ be a measurable function such that $|F(x,\rho)|\leq C(\rho^{p-1}+1)$ for all $\rho>0.$
	Let $u\in \overset{\circ}{S}^{1,p}(\Omega)$ be a nonnegative solution of   
\begin{equation}
\left\{
\begin{array}{ll}
-\mathcal{L}_{p}u=F(x,u), \quad\text{in}\; \Omega\\
u=0, \quad\text{on}\; \partial\Omega. \\
\end{array}
\right.
\label{EQ:fs}
\end{equation}
Then $u\equiv0$ or $u>0$ in $\Omega$.
\end{lem}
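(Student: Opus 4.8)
The plan is to prove the dichotomy by invoking the Harnack inequality for nonnegative solutions of subelliptic quasilinear equations, which is available in \cite[Theorem 3.1]{CDG} under exactly the structural hypothesis $|F(x,\rho)|\le C(\rho^{p-1}+1)$ assumed here. The key idea is that the Harnack inequality is a \emph{local} statement: on every sub-Carnot-Carath\'eodory ball $B$ with $2B\subset\Omega$ one has $\sup_{B}u\le C_{B}\inf_{B}u$, where the constant $C_{B}$ depends only on the ball, the structure constants, $p$, $Q$ and the constant $C$ in the growth bound — but crucially \emph{not} on $u$ itself.

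First I would fix the set $Z=\{x\in\Omega:\ u(x)=0\}$; since $u\in\overset{\circ}{S}^{1,p}(\Omega)$ is (by the regularity theory cited in \cite{CDG}) continuous in $\Omega$, the set $Z$ is relatively closed in $\Omega$. Next I would show $Z$ is also open. Take $x_0\in Z$ and choose a CC-ball $B=B(x_0,\rho)$ with $2B=B(x_0,2\rho)\Subset\Omega$. Apply the Harnack inequality on $B$: $\sup_{B}u\le C_{B}\inf_{B}u\le C_{B}\,u(x_0)=0$. Hence $u\equiv0$ on $B$, so $B\subset Z$, proving $Z$ is open. Since $\Omega$ is connected (an admissible domain, hence in particular a connected open set — and stratified groups are connected, so a ball-chain argument applies if needed), $Z$ is either empty or all of $\Omega$. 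In the first case $u>0$ everywhere in $\Omega$ by nonnegativity and the absence of zeros; in the second case $u\equiv0$. This is exactly the claimed dichotomy.

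The one point that needs a little care — and which I expect to be the main (though minor) obstacle — is making sure the hypotheses of \cite[Theorem 3.1]{CDG} are genuinely met: namely that a nonnegative weak solution $u\in\overset{\circ}{S}^{1,p}(\Omega)$ of $-\mathcal{L}_p u=F(x,u)$ is locally bounded and continuous, so that pointwise values and the Harnack comparison $\sup_B u\le C_B\inf_B u$ make sense, and that the right-hand side $F(x,u(x))$, viewed as an inhomogeneous term, falls within the admissible class of the cited Harnack theorem (it does, since $|F(x,u)|\le C(u^{p-1}+1)$ and $u$ is locally bounded). One should also note that the Harnack constant is uniform over concentric balls so that the chaining over a connected exhaustion of $\Omega$ works; since $\mathbb{G}$ is a stratified group the CC-distance is well-behaved and any two points of $\Omega$ can be joined by a finite chain of overlapping balls each compactly contained in $\Omega$, which closes the argument. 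I would therefore phrase the proof as: continuity of $u$ from \cite{CDG}, local Harnack to get that the zero set is open, closedness from continuity, connectedness of $\Omega$ to conclude, referring to \cite{Dou} for the parallel Heisenberg-group argument.
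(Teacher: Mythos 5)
Your proposal takes essentially the same route as the paper: both rest entirely on the Harnack inequality of \cite[Theorem 3.1]{CDG} applied on quasi-balls, from which the local dichotomy $u\equiv 0$ or $u>0$ is propagated to all of $\Omega$. If anything, your write-up is more careful than the paper's, since you make explicit the continuity of $u$, the openness/closedness of the zero set, and the connectedness/chaining step that the paper's proof passes over with ``that is, $u\equiv0$ or $u>0$ in $\Omega$.''
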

\begin{proof} [Proof of Lemma \ref{strongmaxprinc}.]
Since $u\in \overset{\circ}{S}^{1,p}(\Omega)$ by using the Harnack inequality \cite[Theorem 3.1]{CDG} for $1<p\leq Q$ there exists a constant $C_{R}$ such that
\begin{equation*}
\underset{B(0,R)}{\sup}\{u(x)\}\leq C_{R}\underset{B(0,R)}{\inf}\{u(x)\}
\end{equation*}
for any quasi-ball $B(0,R).$ 
This means $u\equiv0$ or $u>0$ in $B(0,R)$, that is, $u\equiv0$ or $u>0$ in $\Omega$.
\end{proof}

\begin{lem} \label{Hardyineq}
	Let $\Omega\subset \mathbb{G}$ be a bounded open set and let $v\in \overset{\circ}{S}^{1,p}(\Omega)$ be such that $v\geq\epsilon>0$. Then for all $p>1$ and $u\in C^{\infty}_{0}(\Omega)$ we have  
	\begin{equation}\label{ineq_Hardytype}
\int_{\Omega}\frac{|u|^{p}}{f(v)} (-\mathcal{L}_{p}v)dx\leq
\int_{\Omega}|\nabla_{\mathbb G}u|^{p}dx.
	\end{equation}
\end{lem}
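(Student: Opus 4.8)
The plan is to integrate the pointwise Picone identity of Lemma~\ref{thm1} over $\Omega$ and then to convert the divergence-form term that appears into the left-hand side of \eqref{ineq_Hardytype} by an integration by parts whose boundary contribution vanishes because $u$ is compactly supported.

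First I would set $\phi:=|u|^{p}/f(v)$. Since $u\in C^{\infty}_{0}(\Omega)$, since $f$ is locally Lipschitz and positive on $(0,\infty)$, and since $v\in\overset{\circ}{S}^{1,p}(\Omega)$ satisfies $v\ge\epsilon>0$, the function $\phi$ lies in $\overset{\circ}{S}^{1,p}(\Omega)$, is supported in $\operatorname{supp}u\subset\Omega$, and has $\nabla_{\mathbb{G}}\phi\in L^{p}(\Omega)$. Lemma~\ref{thm1} then gives $R(u,v)=L(u,v)\ge 0$ a.e.\ in $\Omega$, that is,
$$|\nabla_{\mathbb{G}}u|^{p}\ \ge\ \nabla_{\mathbb{G}}\phi\cdot|\nabla_{\mathbb{G}}v|^{p-2}\nabla_{\mathbb{G}}v\qquad\text{a.e.\ in }\Omega,$$
and integrating over $\Omega$ yields $\int_{\Omega}|\nabla_{\mathbb{G}}u|^{p}\,dx\ge\int_{\Omega}\nabla_{\mathbb{G}}\phi\cdot|\nabla_{\mathbb{G}}v|^{p-2}\nabla_{\mathbb{G}}v\,dx$.

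To identify the right-hand side with $\int_{\Omega}\frac{|u|^{p}}{f(v)}(-\mathcal{L}_{p}v)\,dx$, I would take $f_{k}:=\phi\,|\nabla_{\mathbb{G}}v|^{p-2}X_{k}v$, note that $\sum_{k=1}^{N_{1}}X_{k}f_{k}=\nabla_{\mathbb{G}}\phi\cdot|\nabla_{\mathbb{G}}v|^{p-2}\nabla_{\mathbb{G}}v+\phi\,\mathcal{L}_{p}v$, and apply the divergence formula of Proposition~\ref{stokes} (equivalently, Green's first identity of Proposition~\ref{green1} with $v$ and $\phi$ in the roles of $u$ and $v$ there). The boundary integral $\int_{\partial\Omega}\sum_{k}\phi\,|\nabla_{\mathbb{G}}v|^{p-2}X_{k}v\,\langle X_{k},d\nu\rangle$ vanishes because $\phi\equiv 0$ near $\partial\Omega$, so that $\int_{\Omega}\nabla_{\mathbb{G}}\phi\cdot|\nabla_{\mathbb{G}}v|^{p-2}\nabla_{\mathbb{G}}v\,dx=-\int_{\Omega}\phi\,\mathcal{L}_{p}v\,dx=\int_{\Omega}\frac{|u|^{p}}{f(v)}(-\mathcal{L}_{p}v)\,dx$. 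Combining this with the inequality from the previous paragraph gives \eqref{ineq_Hardytype}.

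The step I expect to be the real obstacle is the low regularity of $v$: Lemma~\ref{thm1} is stated for differentiable functions and Propositions~\ref{stokes}–\ref{green1} for $C^{1}$ (resp.\ $C^{2}$) functions, whereas here $v$ is only an element of $\overset{\circ}{S}^{1,p}(\Omega)$, defined by completion, and correspondingly $\int_{\Omega}\frac{|u|^{p}}{f(v)}(-\mathcal{L}_{p}v)\,dx$ must a priori be read through the weak formulation. I would handle this by approximation: choose $v_{j}\in C^{1}_{0}(\Omega)$ with $J_{p}(v_{j}-v)\to 0$, adjust them (e.g.\ by truncation from below on a neighbourhood of $\operatorname{supp}u$) so as to retain a uniform positive lower bound there, and hence control $1/f(v_{j})$ and $f'(v_{j})/f(v_{j})^{2}$; run the argument above for each $v_{j}$; and pass to the limit using $\nabla_{\mathbb{G}}v_{j}\to\nabla_{\mathbb{G}}v$ in $L^{p}(\Omega)$, $\phi_{j}:=|u|^{p}/f(v_{j})\to\phi$ in $\overset{\circ}{S}^{1,p}(\Omega)$, and the definition of $\mathcal{L}_{p}v$. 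Essentially all of the (routine) work sits in this limiting argument, which follows the Heisenberg-group scheme of \cite{Dou}.
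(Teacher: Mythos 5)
Your proposal is correct and follows essentially the same route as the paper: apply the pointwise Picone identity of Lemma \ref{thm1} to smooth approximations of $v$ bounded below by a positive constant, integrate, convert the divergence-form term into $\int_{\Omega}\frac{|u|^{p}}{f(v)}(-\mathcal{L}_{p}v)\,dx$ by integration by parts with no boundary contribution since $u\in C^{\infty}_{0}(\Omega)$, and pass to the limit using the continuity of $\mathcal{L}_{p}$ into the dual space and the pointwise convergence of $f(v_{k})$. Your treatment of the approximation step (truncation from below near $\operatorname{supp}u$) is, if anything, slightly more careful than the paper's, which simply asserts the existence of approximants $v_{k}\in C^{1}_{0}(\Omega)$ with $v_{k}>\epsilon/2$.
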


As above here $f:\mathbb{R}^{+}\rightarrow \mathbb{R}^{+}$ is a locally Lipschitz function such that $ (p-1)|f(t)|^{\frac{p-2}{p-1}}\leq f'(t)$ a.e. in $\mathbb{R}^{+}$ with $1<p<\infty$.

\begin{proof}[Proof of Lemma \ref{Hardyineq}.]
By the density argument we can choose $v_{k}\in C^{1}_{0}(\Omega),\;k=1,2,\ldots,$ 
such that $v_{k}>\frac{\epsilon}{2}$ in $\Omega$ and $v_{k}\rightarrow v$
a.e. in $\Omega$.
By using Lemma \ref{thm1} we obtain that
$$0\leq \int_{\Omega}R(u,v_{k})dx,$$
for each $k$.
That is, 
		\begin{equation*}
	\int_{\Omega}\frac{|u|^{p}}{f(v_{k})} (-\mathcal{L}_{p}v_{k})dx\leq
	\int_{\Omega}|\nabla_{\mathbb G}u|^{p}dx.
	\end{equation*}
In addition, from the fact that $\mathcal{L}_{p}$ is a continuous operator from $\overset{\circ}{S}^{1,p}(\Omega)$ to $S^{-1,p^{\prime}}(\Omega),$
$p^{\prime}=\frac{p}{p-1},$ (cf. \cite[Theorem A.0.6]{Per} 
) we have 
$\mathcal{L}_{p}v_{k}\rightarrow \mathcal{L}_{p}v$ in $S^{-1,p^{\prime}}(\Omega)$
and $f(v_{k})\rightarrow f(v)$ pointwise since $f$ is a locally Lipschitz continuous 
function on $(0,\infty)$.
Thus, by the Lebesque dominated convergence theorem and using the fact that 
$f$ is an increasing function on $(0,\infty)$, for any  $u\in C^{\infty}_{0}(\Omega)$ we arrive at 
 \begin{equation}
 \int_{\Omega}\frac{|u|^{p}}{f(v)} (-\mathcal{L}_{p}v)dx\leq
 \int_{\Omega}|\nabla_{\mathbb G}u|^{p}dx,
 \end{equation}
proving \eqref{ineq_Hardytype}.
\end{proof}

By using Lemma \ref{Hardyineq} above we prove the following generalised Picone inequality: 
\begin{thm}\label{P_inequality}
	Let  $\Omega\subset \mathbb{G}$ be a bounded open set and let  $g:\Omega\times\mathbb{R}\rightarrow\mathbb{R}$ be positive, bounded and measurable function such that $g(x,\rho)\leq C(\rho^{p-1}+1)$ for all $\rho>0.$ If functions $v,\,u\in \overset{\circ}{S}^{1,p}(\Omega)$ with $v(\not\equiv0)\geq0$ a.e. $\Omega\in \mathbb G$ are such that $-\mathcal{L}_{p}v=g(x,v)$, then 
	\begin{equation}\label{Picone_ineq}
	\int_{\Omega}\frac{|u|^{p}}{f(v)} (-\mathcal{L}_{p}v)dx\leq
	\int_{\Omega}|\nabla_{\mathbb G}u|^{p}dx,\quad 1<p<\infty.
	\end{equation}
\end{thm}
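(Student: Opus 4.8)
The plan is to deduce \eqref{Picone_ineq} from the pointwise Picone identity of Lemma~\ref{thm1} --- equivalently, from Lemma~\ref{Hardyineq} --- the only new difficulty being that here $v$ is merely nonnegative and not identically zero, rather than bounded below by a fixed positive constant. So first I would restore a positive lower bound on compact subsets: applying the strong maximum principle Lemma~\ref{strongmaxprinc} with $F(x,\rho):=g(x,\rho)$ (admissible since $0\le g(x,\rho)\le C(\rho^{p-1}+1)$) together with $v\ge0$, $v\not\equiv0$ gives $v>0$ in $\Omega$, and chaining the underlying Harnack inequality \cite[Theorem~3.1]{CDG} over connected subsets yields the stronger statement that for every compact $K\Subset\Omega$ there is $\epsilon_{K}>0$ with $v\ge\epsilon_{K}$ a.e.\ on $K$.

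Next I would prove \eqref{Picone_ineq} first for $u\in C^{\infty}_{0}(\Omega)$. Put $K:=\operatorname{supp}u\Subset\Omega$ and $\epsilon:=\epsilon_{K}$; since $f$ is locally Lipschitz and $f(v)\ge f(\epsilon)>0$ on $K$ (recall $f'\ge0$), the function $\varphi:=|u|^{p}/f(v)$ is a nonnegative Sobolev function supported in $K$, hence $\varphi\in\overset{\circ}{S}^{1,p}(\Omega)$, with
\[
\nabla_{\mathbb G}\varphi=\frac{p|u|^{p-2}u\,\nabla_{\mathbb G}u}{f(v)}-\frac{f'(v)|u|^{p}\,\nabla_{\mathbb G}v}{f^{2}(v)}\qquad\text{a.e.\ in }\Omega,
\]
exactly as computed in the proof of Lemma~\ref{thm1}. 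Since $g(x,v)\in L^{\infty}(\Omega)\subset S^{-1,p^{\prime}}(\Omega)$, the weak formulation of $-\mathcal{L}_{p}v=g(x,v)$ extends from $C^{\infty}_{0}(\Omega)$ to $\overset{\circ}{S}^{1,p}(\Omega)$, and testing it against $\varphi$ gives
\[
\int_{\Omega}\frac{|u|^{p}}{f(v)}(-\mathcal{L}_{p}v)\,dx=\int_{\Omega}|\nabla_{\mathbb G}v|^{p-2}\nabla_{\mathbb G}v\cdot\nabla_{\mathbb G}\varphi\,dx=\int_{\Omega}|\nabla_{\mathbb G}u|^{p}\,dx-\int_{\Omega}R(u,v)\,dx,
\]
the last equality being just the algebraic definition of $R(u,v)$; discarding $\int_{\Omega}R(u,v)\,dx\ge0$ by Lemma~\ref{thm1} proves \eqref{Picone_ineq} for such $u$. (Equivalently one may rerun the proof of Lemma~\ref{Hardyineq}, observing that the approximants $v_{k}\in C^{1}_{0}(\Omega)$ with $v_{k}>\epsilon/2$ are only needed on $K$, where the whole integrand is supported.) For a general $u\in\overset{\circ}{S}^{1,p}(\Omega)$ I would then take $u_{j}\in C^{\infty}_{0}(\Omega)$ with $J_{p}(u_{j}-u)\to0$ and $u_{j}\to u$ a.e.; since $-\mathcal{L}_{p}v=g(x,v)\ge0$, Fatou's lemma and the case already treated give
\[
\int_{\Omega}\frac{|u|^{p}}{f(v)}(-\mathcal{L}_{p}v)\,dx\le\liminf_{j\to\infty}\int_{\Omega}\frac{|u_{j}|^{p}}{f(v)}(-\mathcal{L}_{p}v)\,dx\le\liminf_{j\to\infty}\int_{\Omega}|\nabla_{\mathbb G}u_{j}|^{p}\,dx=\int_{\Omega}|\nabla_{\mathbb G}u|^{p}\,dx,
\]
which is \eqref{Picone_ineq}.

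The main obstacle is the first step: converting the weak hypotheses $v\ge0$, $v\not\equiv0$ into a genuine positive lower bound for $v$ on every compact subset of $\Omega$, for which the Harnack inequality behind Lemma~\ref{strongmaxprinc} is indispensable. Once $v$ is bounded away from $0$ on $\operatorname{supp}u$ the denominators $f(v)$ are harmless, and everything else reduces to the algebraic Picone identity of Lemma~\ref{thm1} together with routine density and lower-semicontinuity arguments.
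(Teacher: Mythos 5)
Your argument is correct in substance, and it shares the paper's overall skeleton (strong maximum principle to get $v>0$, reduction to the pointwise nonnegativity $R(u,v)\ge 0$ of Lemma \ref{thm1}, then density in $u$ and Fatou), but it handles the degeneracy of $f(v)$ by a genuinely different device. The paper never needs your locally uniform lower bound: it replaces $v$ by $v_{k}=v+\tfrac1k$, observes that $\mathcal{L}_{p}v_{k}=\mathcal{L}_{p}v$ since adding a constant does not change $\nabla_{\mathbb G}v$, and then applies Lemma \ref{Hardyineq} (whose hypothesis $v\ge\epsilon>0$ is now satisfied globally with $\epsilon=\tfrac1k$) to the pair $(u_{k},v_{k})$ with $u_{k}\in C^{\infty}_{0}(\Omega)$, concluding by Fatou exactly as you do. Your route instead upgrades $v>0$ to $v\ge\epsilon_{K}$ on $K=\operatorname{supp}u$ via Harnack chaining and then tests the weak formulation of $-\mathcal{L}_{p}v=g(x,v)$ directly with $\varphi=|u|^{p}/f(v)$. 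This is a legitimate alternative and arguably more transparent about where the equation for $v$ enters, and it bypasses the double approximation hidden in Lemma \ref{Hardyineq}; but it is also heavier: the claims $v\ge\epsilon_{K}$ a.e.\ on $K$ and $\varphi\in\overset{\circ}{S}^{1,p}(\Omega)$ quietly require local boundedness (indeed continuity) of $v$ and connectedness for the Harnack chain, plus boundedness of $f'(v)$ on the range of $v|_{K}$, none of which the paper's $v+\tfrac1k$ shift needs. If you keep your version, state these regularity inputs explicitly (they do follow from the Capogna--Danielli--Garofalo theory behind Lemma \ref{strongmaxprinc}). Finally, note that both your proof and the paper's invoke Lemma \ref{strongmaxprinc}, which is stated only for $1<p\le Q$, while the theorem is claimed for all $1<p<\infty$; this caveat is inherited from the paper and is not specific to your argument.
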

\begin{proof} [Proof of Theorem \ref{P_inequality}.]
	By Lemma \ref{strongmaxprinc} we have $v>0$ in $\Omega$. Let 
	$v_{k}(x):=v(x)+\frac{1}{k},\;k=1,2,\ldots$, then we have $\mathcal{L}_{p}v_{k}=\mathcal{L}_{p}v$ in $S^{-1,p^{\prime}}(\Omega)$,
	$v_{k}\rightarrow v$ a.e. in $\Omega$ and also 
	$f(v_{k})\rightarrow f(v)$ pointwise in $\Omega$.
	Let $u_{k}\in C^{\infty}_{0}(\Omega)$ be such that 	$u_{k}\rightarrow u$  in $\overset{\circ}{S}^{1,p}(\Omega)$. 
	For the functions $u_{k}$ and $v_{k}$ Lemma \ref{Hardyineq} gives
	\begin{equation*}
	\int_{\Omega}\frac{|u_{k}|^{p}}{f(v_{k})} (-\mathcal{L}_{p}v_{k})dx\leq
	\int_{\Omega}|\nabla_{\mathbb G}u_{k}|^{p}dx.
	\end{equation*}
	Now since $f(v_{k})\rightarrow f(v)$ pointwise by the Fatou lemma
	we arrive at
	\begin{equation*}
	\int_{\Omega}\frac{|u|^{p}}{f(v)} (-\mathcal{L}_{p}v)dx\leq
	\int_{\Omega}|\nabla_{\mathbb G}u|^{p}dx.
	\end{equation*}
	This completes the proof.
\end{proof}

As a consequence of the Picone inequality we have the following comparison type principle: 
\begin{thm} \label{com_principle}
	Let $\Omega$ be an admissible domain. Let $u,v\in \overset{\circ}{S}^{1,p}(\Omega)$ be real-valued functions such that 	
	\begin{equation}
	\left\{
	\begin{array}{ll}
	-\mathcal{L}_{p}u\geq F(x)u^{q}, \quad u>0 \quad\text{in}\; \Omega,\\
	-\mathcal{L}_{p}v\leq F(x)v^{q}, \quad v>0 \quad\text{in}\; \Omega.\\
	\end{array}
	\right.
	\label{comprin}
	\end{equation}
where $0<q<p-1,$ $F$ is a nonnegative function with $F\not\equiv0$. Then
$v\leq u$ a.e. in $\Omega$.
\end{thm}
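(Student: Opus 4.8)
The plan is to argue by contradiction. Suppose $v\le u$ fails, so that the set $\Omega^{+}:=\{x\in\Omega:\ v(x)>u(x)\}$ has positive measure; since $u,v$ are continuous (by the De Giorgi--Nash--Moser regularity underlying the Harnack inequality \cite[Theorem 3.1]{CDG} already invoked for Lemma \ref{strongmaxprinc}), $\Omega^{+}$ is open. By hypothesis $u,v>0$ in $\Omega$, so all quotients below are well defined. The engine of the proof is a D\'iaz--Sa\'a type inequality obtained by inserting suitable test functions into the two differential inequalities in \eqref{comprin} and using the Picone identity of Lemma \ref{thm1} with the admissible choice $f(t)=t^{p-1}$, for which the structural condition $(p-1)|f(t)|^{\frac{p-2}{p-1}}\le f'(t)$ holds with equality.

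Concretely, I would test $-\mathcal{L}_{p}u\ge Fu^{q}$ against the nonnegative function $\phi_{1}=\dfrac{(v^{p}-u^{p})^{+}}{u^{p-1}}$ and $-\mathcal{L}_{p}v\le Fv^{q}$ against $\phi_{2}=\dfrac{(v^{p}-u^{p})^{+}}{v^{p-1}}$, both supported in $\Omega^{+}$, and subtract. Writing $w=v^{p}-u^{p}>0$ on $\Omega^{+}$ and using $\tfrac{w}{v^{p-1}}=v-\tfrac{u^{p}}{v^{p-1}}$, $\tfrac{w}{u^{p-1}}=\tfrac{v^{p}}{u^{p-1}}-u$ together with the two pointwise Picone inequalities ($R\ge 0$ from Lemma \ref{thm1})
$$|\nabla_{\mathbb{G}}v|^{p-2}\nabla_{\mathbb{G}}v\cdot\nabla_{\mathbb{G}}\Big(\tfrac{u^{p}}{v^{p-1}}\Big)\le|\nabla_{\mathbb{G}}u|^{p},\qquad |\nabla_{\mathbb{G}}u|^{p-2}\nabla_{\mathbb{G}}u\cdot\nabla_{\mathbb{G}}\Big(\tfrac{v^{p}}{u^{p-1}}\Big)\le|\nabla_{\mathbb{G}}v|^{p},$$
one checks that the left-hand side of the subtracted inequality equals
$$\int_{\Omega^{+}}\Big(|\nabla_{\mathbb{G}}v|^{p-2}\nabla_{\mathbb{G}}v\cdot\nabla_{\mathbb{G}}\tfrac{w}{v^{p-1}}-|\nabla_{\mathbb{G}}u|^{p-2}\nabla_{\mathbb{G}}u\cdot\nabla_{\mathbb{G}}\tfrac{w}{u^{p-1}}\Big)dx\ \ge\ 0,$$
while the right-hand side equals $\displaystyle\int_{\Omega^{+}}F\,(v^{p}-u^{p})\big(v^{q-p+1}-u^{q-p+1}\big)dx\le 0$, because on $\Omega^{+}$ we have $v>u>0$ and, as $q<p-1$, the map $t\mapsto t^{q-p+1}$ is strictly decreasing. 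Hence both integrals vanish.

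The vanishing of the right-hand integral forces $F\equiv0$ a.e.\ on $\Omega^{+}$, the factors $v^{p}-u^{p}$ and $u^{q-p+1}-v^{q-p+1}$ being strictly positive there; the vanishing of the left-hand integral forces equality a.e.\ on $\Omega^{+}$ in the two Picone inequalities above, and inspecting the proof of Lemma \ref{thm1} (equality cases of the Young and Cauchy--Schwarz steps) this means $\nabla_{\mathbb{G}}(u/v)=0$ a.e.\ on $\Omega^{+}$. Exactly as in the proof of Corollary \ref{uniqueness} --- every element of a Jacobian basis of $\mathbb{G}$ is a Lie bracket of $X_{1},\dots,X_{N_{1}}$ --- the quotient $u/v$ is then constant, equal to some $c\in(0,1)$, on each connected component of $\Omega^{+}$. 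If such a component had a boundary point $x_{0}$ inside $\Omega$, then $u(x_{0})=v(x_{0})>0$ and continuity would give $c=1$, a contradiction; hence each component is relatively clopen in the corresponding component of $\Omega$ and therefore coincides with it, so $u=cv$ on $\Omega^{+}$ with $c\in(0,1)$ and $F\equiv0$ there. Feeding $u=cv$ into \eqref{comprin} gives $c^{q-p+1}Fv^{q}\le-\mathcal{L}_{p}v\le Fv^{q}$ on $\Omega^{+}$, and since $c^{q-p+1}>1$ this once more forces $F\equiv0$ a.e.\ on $\Omega^{+}$; together with $F\ge0$, $F\not\equiv0$ this is the required contradiction (alternatively, on $\Omega^{+}$ the function $v$ is $p$-sub-harmonic and $u$ is $p$-super-harmonic with $v=u$ on $\partial\Omega^{+}$, so the weak comparison principle for $\mathcal{L}_{p}$ gives $v\le u$ on $\Omega^{+}$, contradicting the definition of $\Omega^{+}$). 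Thus $|\Omega^{+}|=0$, i.e.\ $v\le u$ a.e.\ in $\Omega$.

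The main technical obstacle is the admissibility of the test functions $\phi_{1},\phi_{2}$: since $u,v$ vanish on $\partial\Omega$, the denominators $u^{p-1},v^{p-1}$ degenerate there and $\phi_{1},\phi_{2}$ need not lie in $\overset{\circ}{S}^{1,p}(\Omega)$ a priori. I would handle this exactly as in the proof of Lemma \ref{Hardyineq} (and as in \cite{Dou} on the Heisenberg group): replace $v$ by $v+\tfrac1k$ and $u$ by $u+\tfrac1k$ (truncating $v$ at height $k$ and approximating by $C^{\infty}_{0}$ functions where necessary) so that the regularized test functions are legitimate and Lemma \ref{thm1} applies, then pass to the limit $k\to\infty$ by Fatou on the left-hand side, dominated convergence on the right-hand side, the continuity of $\mathcal{L}_{p}\colon\overset{\circ}{S}^{1,p}(\Omega)\to S^{-1,p'}(\Omega)$, and the monotone/continuous dependence on $f$; in essence this is precisely how Theorem \ref{P_inequality} was deduced from Lemma \ref{Hardyineq}. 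A secondary point requiring care is the continuity of $u,v$ (hence openness of $\Omega^{+}$), which again rests on the regularity theory behind \cite[Theorem 3.1]{CDG}.
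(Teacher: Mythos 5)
Your proposal is correct in substance and its engine is the same as the paper's: you test the two inequalities in \eqref{comprin} with $(v^{p}-u^{p})_{+}/u^{p-1}$ and $(v^{p}-u^{p})_{+}/v^{p-1}$ (the paper phrases this as multiplying the pointwise inequality by $w=(v^{p}-u^{p})_{+}$ and integrating), split the operator terms into the two Picone expressions $R(u,v)$ and $R(v,u)$ of Lemma \ref{thm1} with $f(t)=t^{p-1}$ (the paper does the same through Green's identity \eqref{g1} and the integrated Picone inequality \eqref{Picone_ineq}, and your regularization remarks mirror Lemma \ref{Hardyineq} and Theorem \ref{P_inequality}), and use $0<q<p-1$ to give the $F$-term the opposite sign, so both sides of the squeezed inequality vanish. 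Where you genuinely diverge is the endgame: the paper concludes directly that $|[v>u]|=0$, a step that as stated really requires $F>0$ a.e. (with $F$ merely nonnegative one only gets $F=0$ a.e.\ on $[v>u]$, which is exactly the point you stop at), whereas you add the equality-case analysis in Lemma \ref{thm1} ($u=cv$ on components of $\{v>u\}$), a clopen/continuation argument, or the weak comparison principle on $\{v>u\}$ to reach the contradiction. This buys a proof that honestly covers the stated hypothesis ``$F\geq 0$, $F\not\equiv 0$'', at the cost of two soft spots: the openness of $\{v>u\}$ rests on continuity of $u,v$, which you assert from the regularity behind \cite[Theorem 3.1]{CDG} but which is not automatic for mere sub- and supersolutions in $\overset{\circ}{S}^{1,p}(\Omega)$, and the clopen argument needs $\Omega$ connected. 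A cleaner finish avoiding both: once $F=0$ a.e.\ on $[v>u]$ is known, test the difference of the two inequalities in \eqref{comprin} with $(v-u)_{+}\in\overset{\circ}{S}^{1,p}(\Omega)$; strict monotonicity of $\xi\mapsto|\xi|^{p-2}\xi$ then gives $\nabla_{\mathbb{G}}(v-u)_{+}=0$ a.e., hence $(v-u)_{+}=0$ in $\overset{\circ}{S}^{1,p}(\Omega)$, i.e.\ $v\leq u$ a.e.
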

\begin{proof} [Proof of Theorem \ref{com_principle}.] It follows from \eqref{comprin} that 
		\begin{equation*}
F(x)\left(\frac{u^{q}}{u^{p-1}}-\frac{v^{q}}{v^{p-1}}
\right)\leq \frac{-\mathcal{L}_{p}u}{u^{p-1}}+\frac{\mathcal{L}_{p}v}{v^{p-1}}.	
		\end{equation*}
	Multiplying both sides by $w=(v^{p}-u^{p})_{+}$ and integrating over 
	$\Omega$ we have 
		\begin{align}
	\int_{[v>u]}F(x)	\left(\frac{u^{q}}{u^{p-1}}-\frac{v^{q}}{v^{p-1}}
\right)wdx & =	\int_{\Omega}F(x)	\left(\frac{u^{q}}{u^{p-1}}-\frac{v^{q}}{v^{p-1}}
\right)wdx  \\ & \leq \int_{\Omega}\left(\frac{-\mathcal{L}_{p}u}{u^{p-1}}+\frac{\mathcal{L}_{p}v}{v^{p-1}}\right)wdx.	
	\label{43}\end{align}
In addition, a direct calculation gives 
	\begin{align*}
 \int_{\Omega}\left(\frac{-\mathcal{L}_{p}u}{u^{p-1}}+\frac{\mathcal{L}_{p}v}{v^{p-1}}\right)wdx & =
  \int_{\Omega}|\nabla_{\mathbb G}u|^{p-2}
  \nabla_{\mathbb G}u\cdot\nabla_{\mathbb G}
  \left(\frac{w}{u^{p-1}}\right)dx
	\\ &   - \int_{\Omega}|\nabla_{\mathbb G}v|^{p-2}
  \nabla_{\mathbb G}v\cdot\nabla_{\mathbb G}
  \left(\frac{w}{v^{p-1}}\right)dx	
 	\\ & =
 	 \int_{\Omega\cap[v>u]}|\nabla_{\mathbb G}u|^{p-2}
 	\nabla_{\mathbb G}u\cdot\nabla_{\mathbb G}
 	\left(\frac{v^{p}-u^{p}}{u^{p-1}}\right)dx
 	\\ & 	- \int_{\Omega\cap[v>u]}|\nabla_{\mathbb G}v|^{p-2}
 	\nabla_{\mathbb G}v\cdot\nabla_{\mathbb G}
 	\left(\frac{v^{p}-u^{p}}{v^{p-1}}\right)dx	
 	\\ & 
 		= \int_{\Omega\cap[v>u]}\left(|\nabla_{\mathbb G}u|^{p-2}
 	\nabla_{\mathbb G}u\cdot\nabla_{\mathbb G}
 	\left(\frac{v^{p}}{u^{p-1}}\right)-|\nabla_{\mathbb G}v|^{p}\right)dx
 	\\ & 	+ \int_{\Omega\cap[v>u]}\left(|\nabla_{\mathbb G}v|^{p-2}
 	\nabla_{\mathbb G}v\cdot\nabla_{\mathbb G}
 	\left(\frac{u^{p}}{v^{p-1}}\right)-|\nabla_{\mathbb G}u|^{p}\right)dx
 		\\ & = I_{1}+I_{2},
\end{align*}
where 
\begin{equation*}
I_{1}:=\int_{\Omega\cap[v>u]}\left(|\nabla_{\mathbb G}u|^{p-2}
\nabla_{\mathbb G}u\cdot\nabla_{\mathbb G}
\left(\frac{v^{p}}{u^{p-1}}\right)-|\nabla_{\mathbb G}v|^{p}\right)dx
\end{equation*}
	and 
	\begin{equation*}
	I_{2}:=\int_{\Omega\cap[v>u]}\left(|\nabla_{\mathbb G}v|^{p-2}
	\nabla_{\mathbb G}v\cdot\nabla_{\mathbb G}
	\left(\frac{u^{p}}{v^{p-1}}\right)-|\nabla_{\mathbb G}u|^{p}\right)dx.
	\end{equation*}

We have 	
	\begin{align*} I_{1} & =\int_{\Omega\cap[v>u]}|\nabla_{\mathbb G}u|^{p-2}
	\nabla_{\mathbb G}u\cdot\nabla_{\mathbb G} \left(
	\frac{v^{p}}{u^{p-1}}\right)dx-\int_{\Omega\cap[v>u]}|\nabla_{\mathbb G}v|^{p}dx
	\\ & =-\int_{\Omega\cap[v>u]}
	\frac{v^{p}}{u^{p-1}}\mathcal{L}_{p}udx-\int_{\Omega\cap[v>u]}|\nabla_{\mathbb G}v|^{p}dx
	 \leq 0 .
\end{align*}
In the last line we have used Green's first identity \eqref{g1} and the Picone inequality \eqref{Picone_ineq}.
Similarly, we see that  
$ I_{2} \leq 0.$
Thus, we obtain 
 \begin{equation*}\int_{\Omega}\left(\frac{-\mathcal{L}_{p}u}{u^{p-1}}+\frac{\mathcal{L}_{p}v}{v^{p-1}}\right)wdx\leq 0.
\end{equation*}
Consequently, \eqref{43} implies that 
 \begin{equation*}\int_{\Omega\cap [v>u]}F(x)
 \left(\frac{u^{q}}{u^{p-1}}+\frac{v^{q}}{v^{p-1}}\right)(v^{p}-u^{p})dx\leq 0.
\end{equation*}
On the other hand, we have 
$$0\leq F(x)
\left(\frac{u^{q}}{u^{p-1}}+\frac{v^{q}}{v^{p-1}}\right)$$
for $[v>u]$. This means 
$|[v>u]|=0.$
\end{proof}

As another consequence of the generalised Picone inequality we obtain the following D\'iaz-Sa\'a inequality
on stratified Lie groups.
\begin{thm}\label{DS} 	Let $\Omega$ be an admissible domain. Let functions $g_{1}$ and $g_{2}$ satisfy the assumption of Theorem \ref{P_inequality}. If functions $u_{1},\,u_{2}\in \overset{\circ}{S}^{1,p}(\Omega)$ with $u_{1},\,u_{2}(\not\equiv0)\geq0$ a.e. $\Omega\in \mathbb G$ are such that $-\mathcal{L}_{p}u_{1}=g_{1}(x,u_{1})$
	and $-\mathcal{L}_{p}u_{2}=g_{2}(x,u_{2})$, then  
	\begin{equation*}
0\leq\int_{\Omega}
\left(\frac{-\mathcal{L}_{p}u_{1}}{u_{1}^{p-1}}+\frac{\mathcal{L}_{p}u_{2}}{u_{2}^{p-1}}\right) (u_{1}^{p}-
u_{2}^{p}) dx 
.
\end{equation*}
\end{thm}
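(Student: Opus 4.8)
The plan is to use the generalised Picone inequality of Theorem~\ref{P_inequality} twice and add the two resulting inequalities. Since $u_1,u_2\in\overset{\circ}{S}^{1,p}(\Omega)$ are nonnegative nontrivial solutions of $-\mathcal{L}_pu_i=g_i(x,u_i)$ with $g_i$ satisfying the hypotheses of Theorem~\ref{P_inequality}, Lemma~\ref{strongmaxprinc} first gives $u_1,u_2>0$ in $\Omega$. The key choice is to take $f(t)=t^{p-1}$, which is locally Lipschitz on $\mathbb{R}^+$ and satisfies the required structural condition $(p-1)|f(t)|^{\frac{p-2}{p-1}}=(p-1)t^{p-2}=f'(t)$ with equality; with this $f$ the Picone inequality \eqref{Picone_ineq} reads
\begin{equation*}
\int_{\Omega}\frac{|w|^{p}}{v^{p-1}}(-\mathcal{L}_pv)\,dx\leq\int_{\Omega}|\nabla_{\mathbb G}w|^{p}\,dx
\end{equation*}
for any admissible $v$ and any $w\in\overset{\circ}{S}^{1,p}(\Omega)$.

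Next I would apply this with $(v,w)=(u_1,u_2)$ and then with $(v,w)=(u_2,u_1)$, obtaining
\begin{equation*}
\int_{\Omega}\frac{u_2^{p}}{u_1^{p-1}}(-\mathcal{L}_pu_1)\,dx\leq\int_{\Omega}|\nabla_{\mathbb G}u_2|^{p}\,dx,\qquad
\int_{\Omega}\frac{u_1^{p}}{u_2^{p-1}}(-\mathcal{L}_pu_2)\,dx\leq\int_{\Omega}|\nabla_{\mathbb G}u_1|^{p}\,dx.
\end{equation*}
Adding these two and rearranging, the right-hand side becomes $\int_\Omega(|\nabla_{\mathbb G}u_1|^p+|\nabla_{\mathbb G}u_2|^p)\,dx$. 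The point is then to recognize, via Green's first identity \eqref{g1} (or directly the weak formulation), that $\int_\Omega|\nabla_{\mathbb G}u_i|^p\,dx=\int_\Omega(-\mathcal{L}_pu_i)u_i\,dx=\int_\Omega\frac{-\mathcal{L}_pu_i}{u_i^{p-1}}u_i^p\,dx$, so that the sum of the two right-hand sides equals
\begin{equation*}
\int_{\Omega}\frac{-\mathcal{L}_pu_1}{u_1^{p-1}}u_1^{p}\,dx+\int_{\Omega}\frac{-\mathcal{L}_pu_2}{u_2^{p-1}}u_2^{p}\,dx.
\end{equation*}
Subtracting the left-hand sides of the two Picone inequalities and collecting terms then yields exactly
\begin{equation*}
0\leq\int_{\Omega}\left(\frac{-\mathcal{L}_pu_1}{u_1^{p-1}}+\frac{\mathcal{L}_pu_2}{u_2^{p-1}}\right)(u_1^{p}-u_2^{p})\,dx,
\end{equation*}
which is the asserted inequality.

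The main obstacle I anticipate is justifying the identity $\int_\Omega|\nabla_{\mathbb G}u_i|^p\,dx=\langle -\mathcal{L}_pu_i,u_i\rangle$ and the finiteness of all the integrals appearing. Since $u_i$ is only in $\overset{\circ}{S}^{1,p}(\Omega)$ (not $C^2$), Green's first identity \eqref{g1} is not literally applicable; instead one uses that $u_i$ is an admissible test function for its own weak equation, i.e. that $\overset{\circ}{S}^{1,p}(\Omega)$ functions may be used in the weak formulation after the usual density/truncation argument, which is standard and parallels the reasoning already used in the proof of Theorem~\ref{P_inequality} and Lemma~\ref{Hardyineq}. One also needs the growth bound $g_i(x,\rho)\leq C(\rho^{p-1}+1)$ together with $u_i\in L^\infty$ (or a Sobolev embedding) to ensure $\int_\Omega\frac{g_i(x,u_i)}{u_i^{p-1}}u_j^p\,dx<\infty$; here the strict positivity $u_i>0$ from Lemma~\ref{strongmaxprinc} prevents division-by-zero issues on compact subsets, and one argues on the level of the approximating pairs $(u_i+\tfrac1k)$ exactly as in the proof of Theorem~\ref{P_inequality}, passing to the limit by Fatou's lemma. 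Modulo these routine approximation details, the proof is just "apply Picone twice and add."
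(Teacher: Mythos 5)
Your proposal is correct and follows essentially the same route as the paper: apply the generalised Picone inequality \eqref{Picone_ineq} with $f(t)=t^{p-1}$ twice (with the roles of $u_1$ and $u_2$ interchanged), convert $\int_\Omega|\nabla_{\mathbb G}u_i|^p\,dx$ into $\int_\Omega\frac{-\mathcal{L}_pu_i}{u_i^{p-1}}u_i^p\,dx$ via Green's identity/the weak formulation, and add the two resulting inequalities. Your additional remarks on justifying the integration by parts for $\overset{\circ}{S}^{1,p}$ solutions are a reasonable elaboration of details the paper leaves implicit, not a different method.
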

\begin{proof} [Proof of Theorem \ref{DS}.]
	Let functions $u_{1}$ and  $u_{2}$ satisfy the assumptions. Then by the inequality \eqref{Picone_ineq} with $f(u)=u^{p-1}$ as well as for $u_{1}$ and $u_{2}$ we have 
		\begin{equation*}
	\int_{\Omega}\frac{|u_{1}|^{p}}{u^{p-1}_{2}} (-\mathcal{L}_{p}u_{2})dx\leq
	\int_{\Omega}|\nabla_{\mathbb G}u_{1}|^{p}dx.
	\end{equation*}
Using Green's first identity \eqref{g1} we get   	
		\begin{equation}\label{pr16}
	0\leq\int_{\Omega}
	\left(\frac{-\mathcal{L}_{p}u_{1}}{u_{1}^{p-1}}+\frac{\mathcal{L}_{p}u_{2}}{u_{2}^{p-1}}\right) u_{1}^{p} dx.
	\end{equation}
Again, by the inequality \eqref{Picone_ineq} we have 
		\begin{equation*}
	\int_{\Omega}\frac{|u_{2}|^{p}}{u^{p-1}_{1}} (-\mathcal{L}_{p}u_{1})dx\leq
	\int_{\Omega}|\nabla_{\mathbb G}u_{2}|^{p}dx.
	\end{equation*}
As above, this implies 	
		\begin{equation}\label{pr17}
	0\leq\int_{\Omega}
	\left(\frac{\mathcal{L}_{p}u_{1}}{u_{1}^{p-1}}-\frac{\mathcal{L}_{p}u_{2}}{u_{2}^{p-1}}\right)
	u_{2}^{p} dx.
	\end{equation}
Now the combination of \eqref{pr16} and \eqref{pr17} completes the proof. 
\end{proof}
Finally, we prove the following theorem on uniqueness of a positive solution of
\begin{equation}\label{bvp_l}
\left\{
\begin{array}{ll}
-\mathcal{L}_{p}u=F(x,u), \quad\text{in}\; \Omega,\\
u=0, \quad\text{on}\; \partial\Omega, \\
\end{array}
\right.
\end{equation}
where $\Omega$ is an admissible domain.
Here we recall the assumptions on $F(x,u)$:
\begin{itemize}
	\item[(a)] The function $F:\Omega\times\mathbb{R}\rightarrow\mathbb{R}$ is a positive, bounded and measurable function and there exists a positive constant $C>0$ such that $F(x,\rho)\leq C(\rho^{p-1}+1)$ for a.e. $x\in\Omega$.
	\item[(b)] The function $\rho\mapsto \frac{F(x,\rho)}{\rho^{p-1}}$
	is strictly decreasing on $(0,\infty)$ for  a.e. $x\in\Omega$.    
\end{itemize}

\begin{thm} \label{main} There exists at most one positive weak solution to \eqref{bvp_l} for $1<p\leq Q.$	
\end{thm}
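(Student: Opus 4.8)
The plan is to deduce uniqueness from the Díaz–Saá inequality of Theorem \ref{DS}. Suppose $u_1,u_2\in\overset{\circ}{S}^{1,p}(\Omega)$ are two positive weak solutions of \eqref{bvp_l}. Since $1<p\le Q$ and $F$ satisfies the growth bound in (a), Lemma \ref{strongmaxprinc} applies to each $u_i$; being positive and not identically zero, we conclude $u_1>0$ and $u_2>0$ in $\Omega$, so that the quotients $F(x,u_i)/u_i^{p-1}$ are well defined a.e.\ in $\Omega$.

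First I would apply Theorem \ref{DS} with $g_1=g_2=F$. Assumption (a) guarantees that $F$ satisfies the hypotheses of Theorem \ref{P_inequality}, and $u_1,u_2$ are nonnegative, not identically zero, with $-\mathcal{L}_p u_i=F(x,u_i)$, so Theorem \ref{DS} yields
$$0\le\int_{\Omega}\left(\frac{-\mathcal{L}_{p}u_{1}}{u_{1}^{p-1}}+\frac{\mathcal{L}_{p}u_{2}}{u_{2}^{p-1}}\right)(u_{1}^{p}-u_{2}^{p})\,dx.$$
Substituting the equations $-\mathcal{L}_p u_1=F(x,u_1)$ and $-\mathcal{L}_p u_2=F(x,u_2)$ rewrites the right-hand side as
$$\int_{\Omega}\left(\frac{F(x,u_{1})}{u_{1}^{p-1}}-\frac{F(x,u_{2})}{u_{2}^{p-1}}\right)(u_{1}^{p}-u_{2}^{p})\,dx.$$

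Next I would invoke assumption (b). For a.e.\ $x\in\Omega$ the map $\rho\mapsto F(x,\rho)/\rho^{p-1}$ is strictly decreasing on $(0,\infty)$; hence on the set $\{u_1(x)>u_2(x)\}$ the factor $F(x,u_1)/u_1^{p-1}-F(x,u_2)/u_2^{p-1}$ is negative while $u_1^p-u_2^p>0$, and on $\{u_1(x)<u_2(x)\}$ both signs reverse, so in either case the integrand is strictly negative; on $\{u_1=u_2\}$ it vanishes. Thus the integrand is $\le 0$ a.e.\ in $\Omega$ and is strictly negative exactly on $\{u_1\neq u_2\}$. Combined with the lower bound supplied by Theorem \ref{DS}, the integral must vanish, which forces the integrand to be zero a.e., so $|\{x\in\Omega:\,u_1(x)\neq u_2(x)\}|=0$, i.e.\ $u_1=u_2$ a.e.\ in $\Omega$; this is the asserted uniqueness.

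The argument is essentially a repackaging of results already established, so no single step is technically hard; the only delicate point is that the \emph{strict} monotonicity in (b) is precisely what upgrades ``integrand $\le 0$'' to ``strictly negative on the set where the solutions differ'', so that equality in the Díaz–Saá inequality of Theorem \ref{DS} forces $u_1$ and $u_2$ to coincide rather than merely yielding some integral identity. Finiteness and well-definedness of all the integrals involved is not an additional obstacle: it is exactly what the proof of Theorem \ref{DS} already secures, via the Picone inequality of Theorem \ref{P_inequality} applied with $f(t)=t^{p-1}$.
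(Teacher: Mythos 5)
Your proposal is correct and follows essentially the same route as the paper: strong maximum principle (Lemma \ref{strongmaxprinc}) to get strict positivity, the D\'iaz--Sa\'a inequality of Theorem \ref{DS}, substitution of the equations, and the strict monotonicity in (b) to force $u_1=u_2$ a.e. The only difference is presentational --- you argue directly that the integrand must vanish a.e., while the paper frames it as a contradiction with $u_1\not\equiv u_2$ --- and if anything your phrasing makes the role of the strict monotonicity slightly more explicit.
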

\begin{proof} [Proof of Theorem \ref{main}.] Suppose that $u_{1}$ and $u_{2}$ be two different ($u_{1}\not\equiv u_{2}$) non-negative solutions of \eqref{bvp_l}. By using the strong maximum principle in Lemma \ref{strongmaxprinc} for the $p$-sub-Laplacian we have $u_{1}>0$ and $u_{2}>0$ in $\Omega$. By Theorem \ref{DS} we have 
		\begin{equation*}
	0\leq\int_{\Omega}
	\left(\frac{-\mathcal{L}_{p}u_{1}}{u_{1}^{p-1}}+\frac{\mathcal{L}_{p}u_{2}}{u_{2}^{p-1}}\right) (u_{1}^{p}-
	u_{2}^{p}) dx.
	\end{equation*}
On the other hand, according to the assumption (b) we have the strict inequality 
\begin{equation*}
\int_{\Omega}
\left(\frac{F(x,u_{1})}{u_{1}^{p-1}}-\frac{F(x,u_{2})}{u_{2}^{p-1}}\right) (u_{1}^{p}-
u_{2}^{p}) dx<0.
\end{equation*}
Since 
\begin{equation*}
\int_{\Omega}
\left(\frac{-\mathcal{L}_{p}u_{1}}{u_{1}^{p-1}}+\frac{\mathcal{L}_{p}u_{2}}{u_{2}^{p-1}}\right) (u_{1}^{p}-
u_{2}^{p}) dx
=\int_{\Omega}
\left(\frac{F(x,u_{1})}{u_{1}^{p-1}}-\frac{F(x,u_{2})}{u_{2}^{p-1}}\right) (u_{1}^{p}-
u_{2}^{p}) dx,
\end{equation*}
this contradicts that both $u_{1}$ and $u_{2}$ ($u_{1}\not\equiv u_{2}$) are non-negative solutions of \eqref{bvp_l}.
\end{proof}

\end{document}